\patchcmd{\@settitle}{\uppercasenonmath\@title}{}{}{}
\patchcmd{\@setauthors}{\MakeUppercase}{}{}{}
\title{Sensitivity of steady states in a degenerately-damped
  stochastic Lorenz system} 
\author[J. F\"{o}ldes, N. E. Glatt-Holtz,  D. P. Herzog]
     {Juraj F\"{o}ldes, Nathan E. Glatt-Holtz,  David P. Herzog\\
  \scriptsize{emails: foldes@virginia.edu, 
                negh@tulane.edu, dherzog@iastate.edu}
}
\definecolor{Red}{rgb}{0.7,0,0.1}
\definecolor{Green}{rgb}{0,0.7,0}
\definecolor{labelkey}{rgb}{0,0,1}
\numberwithin{equation}{section}
\newtheorem{Theorem}{Theorem}[section]
\newtheorem{Proposition}[Theorem]{Proposition}
\newtheorem{Definition}[Theorem]{Definition}
\newtheorem{Remark}[Theorem]{Remark}
\newtheorem{Question}[Theorem]{Question}
\newcommand{\PP}{\mathbb{P}}
\newcommand{\LL}{\mathcal{L}}
\newcommand{\B}{\mathcal{B}}
\newcommand{\N}{\mathbb{N}}
\newcommand{\M}{\mathcal{M}}
\newcommand{\indFn}[1]{1 \! \! 1_{#1}}
\newcommand{\E}{\mathbb{E}}
\newcommand{\Prb}{\mathbb{P}}
\newcommand{\RR}{\mathbb{R}}
\newcommand{\ad}{\mbox{ad}}
\newcommand{\spn}{\mbox{span}}
\newcommand{\ndeg}{\mathfrak{n}}
\begin{document}

\begin{abstract}
  We study stability of solutions for a randomly driven and
  degenerately damped version of the Lorenz '63 model.  Specifically,
  we prove that when damping is absent in one of the temperature
  components, the system possesses a unique invariant probability
  measure if and only if noise acts on the convection variable.  On
  the other hand, if there is a positive growth term on the vertical
  temperature profile, we prove that there is no normalizable
  invariant state.  Our approach relies on the derivation and analysis
  of non-trivial Lyapunov functions which ensure positive recurrence
  or null-recurrence/transience of the dynamics.

  \medskip {\noindent \scriptsize{ {\it {\bf Keywords:} Stochastic
        Differential Equations, The Lorenz '63 System,
        Invariant Measures, Noise Induced Stabilization, Lyapunov Construction}  \\
      {\it {\bf MSC2020:} 37H30, 37A25, 37A30, 37A50}}}
\end{abstract}

\maketitle

\setcounter{tocdepth}{1}
\tableofcontents

\newpage

\section{Introduction}

In this manuscript, we study the existence of invariant measures of the 
stochastic Lorenz system
\begin{align}
dx&= \sigma (y-x) \,dt + \sqrt{2\gamma_1} \, dB_1 \,,
	\notag\\
dy&= x(\rho-z) \, dt - y\, dt + \sqrt{2\gamma_2} \, dB_2 \,,
	\label{eqn:lor63}\\
dz&= xy \, dt - \beta z \,dt + \sqrt{2\gamma_3} \, dB_3  \,,
	\notag
\end{align}
where $B_i$, $i = 1, 2, 3$ are independent, standard Brownian motions
and $\sigma, \rho, \beta, \gamma_i$ are constants.  We assume that
$\sigma >0$ and $\rho \geq 0$, while for the diffusion parameters
$\gamma_1, \gamma_2, \gamma_3 \geq 0$ we require $\gamma_i>0$ for at
least one index $i$, which means that the system is genuinely
stochastic.  If $\beta > 0$, it is well known that \eqref{eqn:lor63}
possesses a normalizable invariant measure and the long term dynamics
has been extensively studied.  In the present paper we focus on a
`degenerate damping factor' $\beta \leq 0$, and we investigate whether
the presence of noise plays a nontrivial role in stabilizing the
dynamics.

\subsection*{Previous Literature}
The deterministic version of equation~\eqref{eqn:lor63}; that is, when
$\gamma_i = 0$ for $i = 1, 2, 3$, has a long history as a canonical
example of a chaotic dynamical system.  Originally \eqref{eqn:lor63}
was derived from the Boussinesq approximation of Rayleigh-B\' enard
convection, \cite{lorenz1963deterministic}.  It is understood as a
projection of the Boussinesq equation onto one Fourier direction with
wavenumber $k$, in which case $x$ represents the convection rate, and
$y$ and $z$ describe the horizontal and vertical temperature
variations, respectively.  In this framing as a simple model for
convection, $\sigma$ corresponds to the Prandtl number, $\rho$ is a
rescaled Rayleigh number and $\beta$ is an aspect ratio depending on
$k$.

While certainly $\beta$ is a strictly postive parameter in the
original derivation of \eqref{eqn:lor63}, if the Rayleigh number is
large, a typical assumptions for turbulent flows, and $k$ is large,
then $\beta \approx 0$.  Thus it is natural to investigate the system
with $\beta = 0$. On the other hand, practical numerical
considerations lead to the so-called Homogeneous Raylegh-B\' enard
(HRB) system, where a linearly unstable term, an analogue of the case
when $\beta \leq 0$, appears in the temperature equation.  See, for
example, \cite{CalzavariniEtAl2006} and a related two-dimensional ODE
stochastic model in \cite{BD_12}.  Furthermore, equations with similar
structure to HRB also appear in a certain zero Prandlt limit which
models mantle convection, see for example \cite{thual1992zero,
  scheel2017predicting}.  Thus both HRB and the zero Prandlt limit
provide additional motivation for studying the parameter range
$\beta \leq 0$ in \eqref{eqn:lor63}.

Here it is worth emphasizing that noise must be introducted in
\eqref{eqn:lor63} for there to be any hope that this system would
posses any (globally) stable statistics when $\beta \geq 0$.  Indeed,
in the absence of noise when $\beta <0$, the system~\eqref{eqn:lor63}
has initial conditions ($x_0 = y_0= 0$, $z_0 \neq 0$) leading to
infinite time blow-up. On the other hand if $\beta = 0$, then all
points on $z$-axis are equilibria, and therefore there is no compact
global attractor.  Nevertheless, in both cases, the set of initial
conditions that lead to blow-up (or equilibria) sit on a lower
dimensional subset of the phase space.  One may therefore inquire if
one can find suitable noise perturbations which kick trajectories off
of these meager subset of the phase space stabilizing the dynamics and
leading formation of statistically steady states.

Thus the topics studied in this paper for $\beta \leq 0$ fall into a
larger class of ``stabilization-by-noise" problems.  Such problems
have been investigated by a number of researchers in a variety of
contexts.  Let us next briefly recall those works closely related to
our setting.  Motivated by convection models in
\cite{hughes1990chaos,hughes1990low}, the effect of additive noise on
unbounded solutions was studied.  From another perspective advocated
recently in \cite{Elgindi2017}, the range $\beta \leq 0$ above
provides a turbulence analogue of a class of core models in
non-equilibrium statistical mechanics describing coupled oscillators
with heat baths at different temperatures~\cite{BT_02,EH_00, CEHB_18,
  Hairer_09,HaiM_09}.  Similar to such works on heat baths, one
associates a natural energy functional with \eqref{eqn:lor63} which is
``approximately conserved" but which is not globally dissipative.  In
particular, dissipation naturally acts on the $x$ and $y$ directions,
but not necessarily on the $z$-direction, unless of course $\beta >0$.
However, when $\beta \leq 0$, either there is no explicit dissipation
($\beta =0$) or there is in fact a source of linear instability
($\beta < 0$), so it is unclear whether the dissipation in $x$ and
$y$, coupled with the noise, can conspire to ``spread" the dissipation
to the $z$-direction.  Let us finally mention that it is known that an
arbitrary small additive noise can avert deterministic finite-time
blow-up and lead to stable dynamics, see for example~\cite{Sch_93,
  GHW_11, BHW_12, AKM_12, HerzogMattingly2015, LS_17, CFK_17}.  The
presence of noise can also induce stable oscillations~\cite{BD_12}
among other behaviors~\cite{KCSW_19}.

\subsection*{Statement of the Main Results}

In view of the above discussions we aim to answer the following
question in this paper:
\begin{Question}\label{q:IM}
  For what values of $\beta \leq 0$ and of
  $\gamma_1, \gamma_2, \gamma_3 \geq 0$ does \eqref{eqn:lor63} possess
  an invariant probability measure?
\end{Question}

\noindent Recall that in our context there is at least one index $i$
such that $\gamma_i>0$.  The answer to this question is known to be
affirmative for $\beta >0$ and, in fact, in this case the system is
geometrically ergodic when $\gamma_1>0$ and either $\gamma_2 >0$ or
$\gamma_3 >0$; see, for example, \cite{MSH_02}. Thus, our focus in
this paper is on the case when $\beta \leq 0$, where the associated
deterministic dynamics does not possess a compact global attractor.

Let us now present the main results in the paper concerning the
stochastic stability of \eqref{eqn:lor63} which addresses most of
\cref{q:IM}.
\begin{Theorem}
\label{thm:main}
Consider \eqref{eqn:lor63} and assume $\sigma > 0$, $\rho \geq 0$ and
$\gamma_1, \gamma_2, \gamma_2 \geq 0$ with at least one index $i$ for
which $\gamma_i>0$.  For any value of $\beta \leq 0$, the stochastic
dynamics is globally defined (non-explosive in the sense of
\eqref{eq:no:blow} below).
\begin{itemize}
\item[(i)] If $\beta = 0$, $\gamma_1 > 0$, then \eqref{eqn:lor63} has
  a unique invariant probability measure.
\item[(ii)] If $\beta =0$, $\gamma_1 =0$, and one of
  $\gamma_2, \gamma_3$ is positive, then \eqref{eqn:lor63} does not
  possess an invariant probability measure.
\item[(iii)] Finally if $\beta < 0$, then for any
  $\mathcal{K}\subseteq \mathbb{R}^3$ compact, there exists
  $(x,y,z) \notin \mathcal{K}$ such that
\begin{align}
\E_{(x,y,z)} \xi_\mathcal{K}=\infty
\end{align}
where 
\begin{align}
\label{eqn:firsthit}
\xi_\mathcal{K}=\inf\{ t \geq 0 \, : \, (x_t, y_t, z_t ) \in \mathcal{K}\}.  
\end{align}
Consequently, if we furthermore assume that $\gamma_1 > 0$ and either
$\gamma_2 > 0$ or $\gamma_3 > 0$, then \eqref{eqn:lor63} does not
possess an invariant probability measure.
\end{itemize}
\end{Theorem}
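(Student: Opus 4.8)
I would attack all four assertions of \eqref{eqn:lor63} through the generator
$\mathcal{L} = \sum_{i}\gamma_i\partial_i^2 + \sigma(y-x)\partial_x + (x(\rho-z)-y)\partial_y + (xy-\beta z)\partial_z$
and carefully chosen auxiliary functions. The starting point is that the energy $\mathcal{E}=\tfrac12(\sigma^{-1}x^2+y^2+z^2)$ satisfies $\mathcal{L}\mathcal{E}= -x^2-y^2-\beta z^2+(1+\rho)xy+\sigma^{-1}\gamma_1+\gamma_2+\gamma_3$, because the cubic terms produced by the $-xz$ and $+xy$ nonlinearities cancel when tested against $\mathcal{E}$. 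For every $\beta\le 0$ this is bounded by $C(1+\mathcal{E})$, so the standard localization/Gr\"onwall (Khasminskii) argument rules out explosion with no delicate input. The real difficulty is that when $\beta\le 0$ the $z$-equation produces no dissipation at all: with $\beta=0$ one has $\mathcal{L}(\tfrac12 z^2)=z\,xy+\gamma_3$, which has no definite sign, and with $\beta<0$ this term is strictly destabilizing.

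\textbf{Part (i), existence.}
The heart of the matter is that $\gamma_1>0$ manufactures an \emph{effective} dissipation for $z$. Freezing $z\gg 1$, the fast pair obeys $d(x,y)^{T}=A(z)(x,y)^{T}\,dt+(\sqrt{2\gamma_1}\,dB_1,\sqrt{2\gamma_2}\,dB_2)^{T}$ with $A(z)=\begin{pmatrix}-\sigma & \sigma\\ -z & -1\end{pmatrix}$, which is asymptotically stable, and whose stationary Gaussian has $\E[xy]=\dfrac{\sigma\gamma_2-\sigma^{-1}\gamma_1 z}{(\sigma+1)(1+z)}\to-\dfrac{\gamma_1}{\sigma(\sigma+1)}<0$ as $z\to+\infty$. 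Thus, after averaging out $(x,y)$, the slow variable feels a negative drift of constant order at $+\infty$ exactly when $\gamma_1>0$; for $z\to-\infty$ the frozen pair is instead linearly unstable, but along its unstable direction $xy>0$, so $\dot z=xy$ again pushes $z$ back toward the origin, this time for any $\gamma_i$. To turn this into a Lyapunov function I would take $V=\mathcal{E}^{m}+\Psi$ for a suitable power $m$, where $\Psi$ is a lower-order correction obtained by (approximately) solving the Poisson equation $\mathcal{L}_0^{z}g = xy-\E[xy]$ for the frozen fast generator $\mathcal{L}_0^{z}$ and integrating the resulting effective drift in $z$; $\Psi$ must be defined piecewise in the regimes $z\gg 1$, $|z|\lesssim 1+|x|+|y|$, $z\ll -1$ and patched together. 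One then verifies a Foster--Lyapunov inequality $\mathcal{L}V\le C-\kappa\,U$ outside a compact set, with $U$ norm-like but growing much more slowly than $V$ (reflecting that the induced damping in $z$ is only $O(1)$, which is why only uniqueness, not a rate, is claimed), and Krylov--Bogolyubov produces an invariant probability measure. \emph{Executing this construction is the main obstacle}: the cross-terms in $\mathcal{L}\Psi$ must be controlled uniformly, above all for $z\to-\infty$, where the fast subsystem has no invariant measure and its instability — together with the growth of the transverse energy, which is of order $z$ in the stationary regime $z\gg 1$ — has to be tracked quantitatively.

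\textbf{Part (i), uniqueness.}
With $\gamma_1>0$ the iterated Lie brackets of $\partial_x$ with the drift span $\RR^3$ (immediate away from $\{y=0\}\cup\{z=\rho\}$, and obtained with a couple more brackets elsewhere, the only subtle locus being the $z$-axis, where the drift vanishes but $x$ is instantly non-degenerate), so H\"ormander's theorem yields smooth transition densities and the strong Feller property, while a Stroock--Varadhan support argument gives topological irreducibility. Existence together with strong Feller and irreducibility forces uniqueness of the invariant measure.

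\textbf{Parts (ii) and (iii): absence of an invariant probability measure.}
Here I would use that near the positive $z$-axis the fast pair $(x,y)$ collapses — to $0$ deterministically, or to a cloud of size $o(z)$ once noise is present — so that $z$ effectively decouples into a one-dimensional diffusion. When $\beta<0$, along this axis $\dot z=xy-\beta z\approx|\beta|z$, an exponential instability the (at most $O(1)$) effective drift cannot overcome; concretely, fixing $\mathcal{K}\subseteq B_R$, I would build a bounded $\phi\ge 0$ on $\RR^3\setminus B_R$ with $\mathcal{L}\phi\le 0$ there, equal to $1$ on $\partial B_R$ and strictly less than $1$ at a far point $p_0=(0,0,z_0)$, $z_0\gg R$, by shaping $\phi$ as a valley along the positive $z$-axis whose transverse profile is matched to the contracting directions of $A(z)$, so that the inward drift beats the transverse diffusion in the transition region; optional stopping then gives $\Prb_{p_0}(\xi_{B_R}<\infty)\le\phi(p_0)<1$, hence $\E_{p_0}\xi_{\mathcal{K}}\ge\E_{p_0}\xi_{B_R}=\infty$, which is the displayed statement. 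When $\beta=0$ and $\gamma_1=0$, the \emph{sign} of the effective drift on $z$ flips: $\E[xy]\sim\tfrac{\sigma\gamma_2}{(\sigma+1)z}\ge 0$ with fluctuations that vanish as $z\to+\infty$, so $z$ behaves like a one-dimensional diffusion that is transient (when $\gamma_2>0$) or null-recurrent (e.g.\ when only $\gamma_3>0$), and in either case the time-averaged occupation of any bounded set tends to $0$; a comparison argument for the $z$-marginal — using that excursions into $z<0$ are expelled quickly by the instability there — makes this precise and shows no invariant probability measure can exist. Finally, the ``Consequently'' in (iii) holds because the extra assumptions $\gamma_1>0$ and $\gamma_2>0$ or $\gamma_3>0$ give the weak H\"ormander condition at \emph{every} point, hence strong Feller and irreducibility; an invariant probability measure would then be unique, fully supported and positive recurrent, forcing $\E_{x}\xi_{\mathcal{K}}<\infty$ for all $x$ and contradicting the preceding display.
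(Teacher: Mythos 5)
Your high-level intuitions match the paper's (effective dissipation in $z$ manufactured by $\gamma_1>0$; exponential instability under $\beta<0$; the role of $2\sigma z - x^2$), and the hypoellipticity/support ingredients for uniqueness are the same ones the paper uses. For existence in part (i) you propose solving a Poisson equation for a frozen fast subsystem and patching correctors across regimes, whereas the paper obtains its two correctors $\psi_1 = \kappa_1 y/(xz)$ and $\psi_2 \propto R_1^2|z|^{-2/3}-x^2$ by a scaling analysis of the generator, partitioning by the size of $|x||z|^{1/3}$, and glues them with cutoffs. These are cousins, and either could work, but you explicitly flag that executing your version is ``the main obstacle'' and you do not carry it out, so it remains a sketch rather than a proof. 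For part (iii) you take a genuinely different route: a bounded $\mathcal{L}$-superharmonic barrier $\phi$ giving $\Prb_{p_0}(\xi_{B_R}<\infty)<1$ (transience), while the paper uses a Wonham/Hairer-type \emph{pair} of test functions $V_1=\Psi(\lambda(2\sigma z-x^2-A))$ with $\mathcal{L}V_1\ge 0$ and $V_2=\ln H/K$ with $\mathcal{L}V_2\le 1$, which only produces infinite expected hitting time (the weaker conclusion actually stated). Your route would prove something stronger, but the crucial construction of $\phi$ — the ``valley along the $z$-axis matched to the contracting directions'' — is exactly the hard step and is not given.

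There are two genuine gaps beyond the above incompleteness. First, in part (i) your uniqueness argument relies on strong Feller plus topological irreducibility via H\"ormander and Stroock--Varadhan, but this fails when $\gamma_2=\gamma_3=0$. In that case the plane $\{y=0,\,z=\rho\}$ is invariant for the dynamics (with $\beta=0$, $y$ decays to $0$ and $z$ is frozen once $y=0$), so the support of transitions from there is one-dimensional; there is no irreducibility and the H\"ormander condition degenerates on that set. The paper treats this case separately (\cref{lem:mnx}) by a direct computation showing the unique invariant measure is the singular product $\nu_{0,\gamma_1/\sigma}\times\delta_0\times\delta_\rho$, which your framework cannot see. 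Second, your part (ii) argument — that the $z$-marginal ``behaves like'' a transient or null-recurrent one-dimensional diffusion and ``a comparison argument ... makes this precise'' — is not sound as stated, because the $z$-coordinate is not a Markov process and there is no a priori control that would allow comparison from an initial condition distributed according to a hypothetical invariant measure. The paper's actual argument is entirely different and much sharper: apply It\^o to a $C^2$ truncation $F_N$ of $2\sigma z - x^2$ under a putative stationary law, deduce $\E_\mu x^2=0$ by monotone convergence, conclude $x\equiv 0$ a.s., and then read off a contradiction directly from the equations ($z$ becomes a Brownian motion if $\gamma_3>0$, or $\sigma y=\dot x=0$ forces $y\equiv 0$ if $\gamma_2>0$). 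You would need something of that concreteness to close part (ii).
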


\begin{Remark}
  Depending on which noise parameters $\gamma_i$ are positive, the
  issue of uniqueness of invariant measures for the
  system~\eqref{eqn:lor63} can also be subtle.  In the recent
  interesting paper~\cite{CH_20}, it is shown that when $\beta >0$,
  $\gamma_1=\gamma_2=0$ and $\gamma_3 >0$, then invariant measures can
  either be unique or not, and the uniqueness depends on the magnitude
  of the non-zero noise parameter $\gamma_3>0$.
\end{Remark}

\subsection*{Overview of the Analysis}

Since the coefficients of equation~\eqref{eqn:lor63} are globally
smooth ($C^\infty$) functions, the proof of well-posedness of
\eqref{eqn:lor63} follows immediately once one establishes absence of
finite-time explosion.  In our case, non-explosivity is then concluded
by using the natural Lyapunov function associated to the dynamics.
See \cref{prop:noexp} for further details.  However, since
$\beta \leq 0$, this natural function is not robust enough to
determine the existence/non-existence of an invariant probability
measure precisely because of the absence of explicit dissipation in
the $z$-direction.  Thus we cannot use this function directly to
answer our main question.

To this end, the typical route used to conclude
existence/non-existence of an invariant probability measure is to
estimate, for a `big' compact set $\mathcal{K}\subseteq \mathbb{R}^3$,
the expectation of the random variable $\xi_\mathcal{K}$ as
in~\eqref{eqn:firsthit}.  Indeed, if one can show that
$(x,y,z) \mapsto \E_{(x,y,z)} \xi_\mathcal{K}$\footnote{$\E_{(x,y,z)}$
  denotes the expectation with respect to the process $(x_t,y_t, z_t)$
  started at $(x,y,z)$ at time $0$} is bounded on compact sets in
$\mathbb{R}^3$, then an invariant probability measure can be
constructed using a slight modification of the cycle argument of
Khasminskii~\cite{Khas_11}.  See also \cite{RB_06, Kliem_87}.  On the
other hand, if there is sufficient noise in the
system~\eqref{eqn:lor63} by way of hypoellipticity and support
properties of the solution, then global finiteness of the function
$\E_{(x,y,z)} \xi_\mathcal{K}$ for some $\mathcal{K}$ compact is
equivalent to the existence of an invariant probability
measure~\cite{Kliem_87}.  Thus our arguments center around determining
whether or not this expectation can be shown to be globally finite.

Our approach to estimating $\E_{(x,y,z)}\xi_\mathcal{K}$ relies on
detailed Lyapunov constructions.  Specifically, to show the
expectation is bounded on compact sets, we seek a $C^2$ function
$V: \RR^3 \to \RR$ with $V(x,y,z) \rightarrow \infty$ as
$|(x,y,z)|\rightarrow \infty$ and such that
\begin{align}
  \mathcal{L}V \leq -c + d \indFn{\mathcal{K}} 
  \label{eq:rec:cond:Lyp}
\end{align}
for some $c, d > 0$ and some compact set $\mathcal{K} \subset \RR^3$,
where $\mathcal{L}$ is the infinitesimal generator of
\eqref{eqn:lor63} defined in~\eqref{eq:Gen:L63:gen:case}.  While the
above Lyapunov criteria is quite standard to show existence of an
invariant probability measure (see, for example, \cite{Khas_11, MT_93,
  RB_06}), we will see that Lyapunov constructions can also be
employed to establish nonexistence.  Following \cite{Won_66} and
generalizations more recently appearing in \cite{Hairer_09, Khas_11},
\cref{thm:criterianon} below identifies a condition guaranteeing
infinite expected return times depending on the existence of two test
functions, $V_1,V_2:\RR^3\rightarrow \RR$ satisfying Lyapunov-like
conditions.

Regarding the existence of an invariant probability measure when $\beta = 0$, we
construct a suitable Lyapunov function $V$ satisfying
\eqref{eq:rec:cond:Lyp} by pivoting off of the
 natural Lyapunov function one uses to show well-posedness, namely 
\begin{align}
  H(X) = H(x,y,z):= x^2+y^2+z^2- 2(\sigma+ \rho)z.  
  \label{eq:lyp:base:intro}
\end{align}
Here, a direct computation, see \eqref{eq:Gen:L63:gen:case}, leads to
\begin{align}
  \mathcal{L} H = -2(\sigma x^2+y^2) - 2\beta z^2 + 2\beta (\sigma + \rho) z
                   + 2( \gamma_1 + \gamma_2 + \gamma_3) 
  \label{eq:gen:lyp:b:intro}
\end{align}
which reveals the necessity of adding supplemental terms to $H$ thus
seeking a Lypunov function of the form $H + \psi$ in order to achieve
\eqref{eq:rec:cond:Lyp} for the region where $x,y$ are bounded but
where $|z|$ is large; that is \eqref{eq:rec:cond:Lyp} requires
$\mathcal{L} (H + \psi)$ to be uniformly negative off of a compact
set.  The definition of the supplemental perturbation $\psi$ makes use
of certain rescalings of the dynamics at large values, allowing one to
better parse relevant terms in a neighborhood of the point at
infinity.  Note that these asymptotics initially yield a piecewise
definition of the perturbation $\psi$ which, in order to obtain a
globally $C^2$ requires that the different regions be glued together.
The detailed derivation of $\psi$ and the motivation behind the
scaling we choose are carried out in \cref{sec:exist}.

To extract conclusion (iii) in \cref{thm:main}, we again employ
Lyapunov methods in order to estimate the expected return time to a
given compact set $\mathcal{K}$.  The principal observation leading to
the proof is that the function $M$ given by
\begin{align}
  M(X) = M(x,y,z) = 2 \sigma z - x^2
  \label{eq:z:cru:test}
\end{align}
satisfies 
\begin{align}
  \mathcal{L} M = 2 \sigma( | \beta| z + x^2) - \gamma_1 \,.  
  \label{eq:gen:M:test}
\end{align}
Now, if it were the case (although it is far from true) that the set
$\{ (x,y,z) \, : \sigma z \geq x \}$ is invariant for the dynamics,
then~\eqref{eq:z:cru:test} and~\eqref{eq:gen:M:test} would together
imply that the solution is growing exponentially in this region,
provided $z>0$ is large enough initially.  The proof then nontrivially
modifies this initial observation to conclude the result, even in the
presence of noise or dynamics effects that can steer the process in
and out of this region.  See ~\cref{sec:non-existence} for further
details.

Finally, to treat the borderline case $\beta = 0$ when $\gamma_1 = 0$,
we proceed with a direct approach.  For example, when
$\gamma_1 =\beta=0$ but either $\gamma_2$ or $\gamma_3$ are strictly
positive we again use of the test function $M$ defined in
\eqref{eq:z:cru:test}.  At least formally, \eqref{eq:gen:M:test} and
Dynkin's formula immediately implies that $\E x \equiv 0$ if the
initial condition is distributed as an invariant state.  However, this
produces a contradiction to the structure of \eqref{eqn:lor63}, since if
$\gamma_2 > 0$, then $y$ is non-trivial, leading to non-zero
derivative of $x$. On the other hand, if $\gamma_3 > 0$ and $y = 0$,
then $z$ evolves as a Brownian motion, which is not a normalizable
invariant.  Similar direct argumentation can be employed to show that
when $\gamma_2 = \gamma_3 =\beta = 0$ and $\gamma_1 > 0$, then the
only stationary solution is an Ornstein-Uhlenbeck process concentrated
on the $x$ component.

\subsection*{Organization} The rest of the manuscript is organized as
follows.  \cref{sec:methods} provides a self contained summary of some
general results on Lyapunov methods for It\^{o} diffusions.
\cref{sec:uniqueness:Lor} contains the details for nonexplosivity and
conditions for the uniqueness of invariant measures for
\eqref{eqn:lor63}.  The results of this section also imply the
uniqueness part of \cref{thm:main}~(i) when $\gamma_1>0$ and
either $\gamma_2 >0$ or $\gamma_3 >0$. In \cref{sec:exist}, we carry
out the construction and rigorous analysis of a Lyapunov function
leading to the existence of an invariant probability \eqref{eqn:lor63}
in the case when $\beta = 0$ and $\gamma_1 > 0$.  The main result in
this section, \cref{prop:pos:rec}, in particular establishes the
existence part of \cref{thm:main}~(i).  In \cref{sec:sensitive:noise},
we prove \cref{thm:main} (ii) and establish uniqueness of the
invariant measure when $\beta =0$, $\gamma_1 >0$ and
$\gamma_2=\gamma_3=0$, thus finalizing the proof of
\cref{thm:main}~(i).  Finally, in \cref{sec:non-existence} we
prove \cref{thm:main} (iii).  This proof also relies on
Lyapunov constructions.

\section{Methodological Foundations: 
  Lyapunov Techniques}
\label{sec:methods}

This section presents some general results on Lyapunov methods for
It\^o diffusions, providing the foundation for our analysis in later
sections.  To keep the paper self-contained, we present detailed
proofs for some results familiar to specialists, but which are
dispersed in literature under varied sets of assumptions.

Let $M_{nk}$ denote the set of $n \times k$ matrices with entries in
$\RR$.  Given any $F \in C^2(\RR^n; \RR^n)$ and
$G = (G_1, \ldots, G_k) \in C^2(\RR^n; M_{nk})$, let $X_t$ be the
process on $\RR^n$ satisfying the It\^{o} stochastic differential
equation
\begin{align}
\label{eqn:sdegen}
dX_t= F(X_t) \, dt + G(X_t) \, dB_t
    =F(X_t) \, dt + \sum_{l = 1}^k G_l(X_t) \, dB_t^l\,.
\end{align}
Here, $B_t=(B_t^1, \ldots, B_t^k)^T$ is a standard $k$-dimensional
Brownian motion defined on a filtered probability space
$(\Omega, \mathcal{F}, \{\mathcal{F}_t\}_{t \geq 0}, \PP)$, where $\E$
denotes the corresponding expected value.  We denote by $\LL$ the
infinitesimal generator of the process $X_t$ acting on functions
$V\in C^2(\RR^n; \RR)$, namely,
\begin{align}
\LL V(X) :=& F(X) \cdot \nabla V(X) 
             + \frac{1}{2} (G G^T)(X) \nabla^2 V(X)
            \notag\\
  =& \sum_{j = 1}^n F_j(X)  \partial_{X_j} V(X) 
     + \frac{1}{2}\sum_{i,j =1}^n \sum_{l =1}^k G_{il} G_{jl}(X) 
              \partial^2_{X^i X^j} V(X). 
\label{fpe}
\end{align}    
Let $\B$ be the Borel $\sigma$-field of subsets of $\RR^n$.

Although globally defined solutions~\eqref{eqn:sdegen} in time are not
guaranteed for general $C^2$ drifts $F$ and diffusions $G$, there are
unique pathwise solutions defined until the first time $\tau$ in which
the process leaves every bounded domain in space.  Specifically, given
a fixed initial condition $X$, if we define stopping times
\begin{align}
  \tau_n := \inf\{ t\geq 0 \, : \, |X_t| \geq n\}, 
  \,\, \text{ and } \,\, 
  \tau  := \lim_{n\rightarrow \infty} \tau_n , 
  \label{eq:announce:blow}
\end{align}
then 
solutions exist and are unique for all times $t<\tau$, $\PP$-almost
surely.  We call $\tau$ the \emph{explosion time} of the process $X_t$
and say that $X_t$ is \emph{nonexplosive} if
\begin{align}
\PP_X\{ \tau < \infty\} =0 \,\,\text{ for all initial conditions}\,\, X\in \RR^n. 
    \label{eq:no:blow}
\end{align} 
In the above~\eqref{eq:no:blow}, the notation $\PP_X$ means the
process $X_t$ has $X_0=X\in \RR^n$.

If $X_t$ is nonexplosive, solutions of~\eqref{eqn:sdegen} exist and
are unique for all times $t\geq 0$ almost surely. Moreover, $X_t$
generates a Markov process and we define the transition probability
measure $\mathcal{P}_t$ as
$\mathcal{P}_t(X, \,\cdot\, )= \PP_X\{ X_t \in \,\cdot\, \}$.  The
Markov semigroup $\mathcal{P}_t$ acts on bounded, $\B$-measurable
functions $V: \RR^n\rightarrow \RR$ via
\begin{align}
  \mathcal{P}_t V(X) = \E_X V(X_t)
  = \int_{\RR^n} V(Y) \mathcal{P}_t(X,dY),
  \quad X \in \RR^n,
\end{align}
and on borel measures $\pi$ according to
\begin{align}
  \pi \mathcal{P}_t(A) = \int_{\RR^n} \pi(dX) \mathcal{P}_t(X, A)
  \quad A \in \B.
\end{align}
We say that a positive measure $\pi$ is an \emph{invariant measure}
for $\mathcal{P}_t$ if $\pi \mathcal{P}_t= \pi$ for all $t\geq 0$.  An
invariant measure $\pi$ for $\mathcal{P}_t$ with $\pi(\RR^n)=1$ is
called an \emph{invariant probability measure} for $\mathcal{P}_t$.

The next result outlines the criteria for a process defined by
\eqref{eqn:sdegen} to be both nonexplosive and have finite expected
returns to a compact set.

\begin{Proposition}
\label{prop:stab}
Given $F, G \in C^2$, the following statements hold for solutions
$X_t$ of \eqref{eqn:sdegen} and the corresponding infinitesimal
generator $\LL$ defied in \eqref{fpe}.
\begin{itemize}
\item[\emph{(a)}] Suppose that there exist a function
  $V\in C^2(\RR^n; \RR)$ such that $V(X) \rightarrow \infty$ as
  $|X|\rightarrow \infty$ and  constants $c, d>0$
  with the global bound 
  \begin{align}
    \LL V(X) \leq c V(X) + d \qquad \textrm{for all }\quad X \in \RR^n.  
    \label{eq:LV:does:not:blow}
  \end{align}
Then $X_t$ is nonexplosive, namely \eqref{eq:no:blow} holds.  
\item[\emph{(b)}] Suppose that $X_t$ is nonexplosive and that there
  exist a function $V\in C^2(\RR^n; [0, \infty))$, a compact set
  $\mathcal{K}\subseteq \RR^n$ and constants $c,d>0$ such that 
\begin{align}
  \LL V(X)\leq - c + d\textbf{1}_{\mathcal{K}}(X)
  \qquad \textrm{for all }\quad X \in \RR^n. 
  \label{eq:cru:Lyp:bound}
\end{align}   
 If
\begin{align}
  \xi_\mathcal{K} :=\inf \{ t\geq 0 \, : \, X_t \in \mathcal{K}\}  
  \label{eq:return:tm:comp}
\end{align}
denotes the first hitting time of $\mathcal{K}$ by $X_t$, then
\begin{align}
\label{eqn:expbound}
\E_X \xi_{\mathcal{K}} \leq \frac{V(X)}{c},   
\end{align}
for all $X\in \RR^n$.
\end{itemize}
\end{Proposition}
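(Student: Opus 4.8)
The plan is to prove the two parts of \cref{prop:stab} by the standard localization-plus-Dynkin argument, applied to the stopping times $\tau_n$ from \eqref{eq:announce:blow}. For part (a), I would first fix an initial condition $X \in \RR^n$ and apply It\^o's formula to $e^{-ct} V(X_t)$ up to the stopped time $t \wedge \tau_n$; since $V \in C^2$ and $X_{s}$ stays in the ball of radius $n$ for $s < \tau_n$, the stochastic-integral term is a genuine martingale (its integrand is bounded on that ball), so taking expectations kills it. Using the hypothesis \eqref{eq:LV:does:not:blow} in the form $\LL V - cV \le d$, the drift term is controlled by $\int_0^{t\wedge\tau_n} e^{-cs} d \, ds \le d/c$, which yields
\begin{align}
\E_X \big[ e^{-c(t \wedge \tau_n)} V(X_{t \wedge \tau_n}) \big] \le V(X) + \frac{d}{c}.
\label{eq:plan:nonexp}
\end{align}
On the event $\{\tau_n \le t\}$ we have $|X_{t \wedge \tau_n}| = n$, so $V(X_{t\wedge\tau_n}) \ge \inf_{|Y| = n} V(Y) =: m_n$, and $m_n \to \infty$ as $n \to \infty$ because $V$ is a norm-proper function. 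Bounding the left side of \eqref{eq:plan:nonexp} below by $e^{-ct} m_n \, \PP_X\{\tau_n \le t\}$ gives $\PP_X\{\tau_n \le t\} \le e^{ct}(V(X) + d/c)/m_n \to 0$ as $n \to \infty$. Since this holds for every fixed $t$, and $\{\tau \le t\} \subseteq \{\tau_n \le t\}$ for all $n$, we conclude $\PP_X\{\tau \le t\} = 0$ for each $t$, hence $\PP_X\{\tau < \infty\} = 0$; this is exactly \eqref{eq:no:blow}.

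For part (b), I would now assume nonexplosion and run the analogous argument on $V$ itself (no exponential weight) stopped at $t \wedge \tau_n \wedge \xi_\mathcal{K}$. Since $V \ge 0$, dropping $V(X_{t\wedge\tau_n\wedge\xi_\mathcal{K}})$ and applying \eqref{eq:cru:Lyp:bound}, which off $\mathcal{K}$ reads $\LL V \le -c$, together with the fact that the process has not yet hit $\mathcal{K}$ on $[0, t\wedge\tau_n\wedge\xi_\mathcal{K})$, gives after taking expectations
\begin{align}
0 \le \E_X V(X_{t \wedge \tau_n \wedge \xi_\mathcal{K}}) \le V(X) - c \, \E_X\big[ t \wedge \tau_n \wedge \xi_\mathcal{K} \big].
\label{eq:plan:return}
\end{align}
Rearranging, $\E_X[t \wedge \tau_n \wedge \xi_\mathcal{K}] \le V(X)/c$. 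Then I would let $n \to \infty$ (using $\tau_n \uparrow \tau = \infty$ a.s. by nonexplosion) and $t \to \infty$, invoking the monotone convergence theorem on $t \wedge \tau_n \wedge \xi_\mathcal{K} \uparrow \xi_\mathcal{K}$, to obtain $\E_X \xi_\mathcal{K} \le V(X)/c$, which is \eqref{eqn:expbound}.

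The one genuinely delicate point—and the step I would treat most carefully—is the justification that the It\^o correction term really is integrable and the martingale part really has zero expectation \emph{before} localization is removed; this is why stopping at $\tau_n$ (and at $\xi_\mathcal{K}$ in part (b)) is essential: on $\{s < \tau_n\}$ the path lies in a compact ball, so $\nabla V$, $\nabla^2 V$, $F$, and $GG^T$ are all bounded along the path, making $\int_0^{\cdot \wedge \tau_n} \nabla V(X_s)^T G(X_s)\, dB_s$ a true martingale and the time integral of $\LL V$ absolutely convergent. The subsequent passage to the limit is then purely a matter of monotone/dominated convergence, with the sign conditions $V \ge 0$ and $\LL V \le -c$ off $\mathcal{K}$ doing the work. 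I would also remark that part (b) already shows $\xi_\mathcal{K} < \infty$ almost surely, which is the input needed later for the Khasminskii-type construction of an invariant probability measure referenced in the introduction.
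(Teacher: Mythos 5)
Your part (b) argument is the same localization-plus-Dynkin scheme the paper carries out: stop at $t \wedge \tau_n \wedge \xi_\mathcal{K}$, use $\LL V \le -c$ on paths that have not yet hit $\mathcal{K}$, drop the nonnegative $\E_X V(X_{t\wedge\tau_n\wedge\xi_\mathcal{K}})$, and pass $n\to\infty$, $t\to\infty$ by monotone convergence and nonexplosivity. For part (a) the paper only cites references, and the exponential-weight estimate you give (applying It\^o to $e^{-ct}V(X_t)$, bounding $\PP_X\{\tau_n\le t\}$ via $m_n = \inf_{|Y|=n}V(Y)\to\infty$) is the standard argument those references contain; the one point to tighten is that the lower bound $\E_X\bigl[e^{-c(t\wedge\tau_n)}V(X_{t\wedge\tau_n})\bigr]\ge e^{-ct}m_n\,\PP_X\{\tau_n\le t\}$ uses $V\ge 0$, whereas the hypothesis only gives $V\in C^2(\RR^n;\RR)$ coercive. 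Since coercive and continuous implies $V$ is bounded below, one either adds a constant to $V$ (which does not change $\LL V$ and so preserves \eqref{eq:LV:does:not:blow}) or keeps the harmless additive constant in the lower bound; either way the limit $\PP_X\{\tau_n\le t\}\to 0$ and hence \eqref{eq:no:blow} follow as you state.
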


The proof of the proposition above is a straightforward application of
It\^{o}'s formula and can be found in a number of references, see, for
example~\cite{Khas_11, MT_93, RB_06}.  To illustrate the basic idea,
we provide details for part (b).

\begin{proof}[Proof of \cref{prop:stab} \emph{(b)}]
  Take
  $\xi := \xi_{t,n, \mathcal{K}} := t\wedge \tau_n \wedge
  \xi_\mathcal{K}$ with $\tau_n$ defined as in
  \eqref{eq:announce:blow}.  If $X \in \mathcal{K}$, then
  $\E_X \xi_{\mathcal{K}} = 0$ and \eqref{eqn:expbound} follows.
  Otherwise, $\textbf{1}_{\mathcal{K}}(X) = 0$ and by Dynkin's formula
  and \eqref{eq:cru:Lyp:bound} we have
\begin{align} 
    0 \leq \E_X V(X_{\xi}) 
         = V(X) + \E_X \int_0^{\xi} \LL V(X_s) \, ds 
  \leq V(X) - c \E_X \xi,  
\end{align}  
for any $t \geq 0$ and $n \geq 1$.  Rearranging and using
$V\geq 0$ produces the estimate
\begin{align}
\E_X \xi_{t,n, \mathcal{K}} \leq \frac{V(X)}{c}.  
\end{align}
Passing $n\rightarrow \infty$ and then $t\rightarrow \infty$ using both
monotone convergence and nonexplosivity,  that is, $\tau_n \to \infty$
a.s., gives the desired bound \eqref{eqn:expbound}.
\end{proof}
 
The next result provides the criteria we use in
\cref{sec:non-existence} to show that the expected return time to
compact sets is infinite for some initial data in the case case when
the parameter $\beta <0 $ in equation~\eqref{eqn:lor63}.  While the
result presented here can be traced back to at least \cite{Won_66} we
believe it deserves further attention as a powerful tool for the study
of stochastic (in)stability.  Note that the original formulation in
\cite{Won_66} imposes more hypotheses on the process $X_t$ than
needed; for example a uniform ellipticity assumption for the
generator $\LL$ was imposed in \cite{Won_66}.  This was noticed in the
paper~\cite{Hairer_09}, where a generalization of the results from
\cite{Won_66} is stated.  Here, we provide the details for this
generalization and also phrase the conclusions in a slightly different
way.  See also Lemma~3.11 of~\cite{Khas_11}.

To formulate the result, for $R>0$ we let 
\begin{align}
\xi_R= \inf\{ t\geq 0 \, : \, |X_t| \leq R\},
  \label{eq:return:ballz}
\end{align}
that is, $\xi_R$ is the first hitting time of the closed ball of radius
$R>0$ centered at the origin in $\RR^n$.  This is a small abuse of
notation, see $\xi_\mathcal{K}$ in \eqref{eq:return:tm:comp} above,
but there should not be any confusion given the context.

\begin{Theorem}
\label{thm:criterianon}
Suppose that there exist $V_1, V_2 \in C^2(\RR^n; \RR)$ satisfying the
following properties:
\begin{itemize}
\item[\emph{(p1)}]
  $\displaystyle{\limsup_{|X|\rightarrow \infty} V_1(X) = \infty.}$
\item[\emph{(p2)}] $V_2$ is strictly positive outside of a compact
  set.
\item[\emph{(p3)}]
  $\displaystyle{\limsup_{S \rightarrow \infty}\frac{\max_{|X|=S}
      V_1(X)}{\min_{|X|=S} V_2(X)} =0} $.
\item[\emph{(p4)}] There exists $R>0$ such that 
  \begin{align}
    \LL V_1(X) \geq 0 \quad \text{ and } \quad \LL V_2(X) \leq 1 
    \label{eq:inf:gen:blow:cond}
  \end{align}
  for every $|X| \geq R$, where $\LL$ is the generator for
  \eqref{eqn:sdegen} given in \eqref{fpe}.
\end{itemize}  
Then, there exists $M \geq 0$ such that
\begin{align}
\E_{X_*} \xi_{R}= \infty, \,
  \quad \text{ whenever } |X_*| \geq R \;
  \text{ and } \; V_1(X_*) \geq M.
  \label{eq:inf:ret:tm}
\end{align}
\end{Theorem}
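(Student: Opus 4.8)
\emph{Proof strategy.} The plan is a Wonham-type non-recurrence argument (in the spirit of \cite{Won_66, Khas_11}): run the process until its first return to the ball $\{|X|\le R\}$, compare it on a sequence of expanding annuli against the two test functions, and exploit that \emph{(p4)} makes $V_1(X_t)$ a submartingale on each annulus --- so trajectories escape outward with non-negligible probability --- while it makes $V_2(X_t)-t$ a supermartingale --- so reaching large radii is slow --- with \emph{(p3)} ensuring the second effect overwhelms the first. Write $\phi_1(S):=\max_{|X|=S}V_1(X)$, $\psi_2(S):=\min_{|X|=S}V_2(X)$, $m_R:=\min_{|X|=R}V_2(X)$, all finite by compactness, and set $M:=|\phi_1(R)|+1$, so that $M\ge 0$ and $M>\phi_1(R)$. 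Using \emph{(p1)} pick $\rho_k\to\infty$ with $\phi_1(\rho_k)\to\infty$; discarding finitely many indices we may assume $\rho_k>R$, $\phi_1(\rho_k)>M$ (in particular $\phi_1(\rho_k)>\phi_1(R)$ and $\phi_1(\rho_k)>0$) and, by \emph{(p2)}, $\psi_2(\rho_k)>0$, for every $k$. The one elementary observation I would record first is that $\psi_2(\rho_k)/(\phi_1(\rho_k)-\phi_1(R))\to\infty$ as $k\to\infty$: the positive numbers $\phi_1(\rho_k)/\psi_2(\rho_k)$ have $\limsup\le 0$ by \emph{(p3)}, hence tend to $0$, and since $\phi_1(\rho_k)\to\infty$ this is equivalent to the claim.

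Now fix $X_*$ with $|X_*|\ge R$ and $V_1(X_*)\ge M$ (note $M>\phi_1(R)$ forces $|X_*|>R$), let $\tau_\rho:=\inf\{t\ge0:|X_t|\ge\rho\}$ as in \eqref{eq:announce:blow}, $\theta_k:=\xi_R\wedge\tau_{\rho_k}$, and assume, discarding small $k$, that $\rho_k>|X_*|$. If $\Prb_{X_*}(\theta_k=\infty)>0$ for some $k$, then $\{\theta_k=\infty\}\subseteq\{\xi_R=\infty\}$ gives $\E_{X_*}\xi_R=\infty$ and we are done; so assume $\theta_k<\infty$ a.s.\ for every $k$ and fix such a $k$. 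On $[0,\theta_k]$ the process stays in the compact annulus $A_k:=\{R\le|X|\le\rho_k\}$, so it does not explode there and Itô's formula applies with genuine (not merely local) martingale parts, since $\nabla V_1,\nabla V_2,G$ are bounded on $A_k$. By \emph{(p4)}, $V_1(X_{t\wedge\theta_k})$ is a bounded submartingale; optional stopping ($\theta_k<\infty$ a.s., $V_1$ bounded on $A_k$) gives $V_1(X_*)\le\E_{X_*}V_1(X_{\theta_k})$, and since $|X_{\theta_k}|\in\{R,\rho_k\}$ this reads $V_1(X_*)\le\phi_1(R)(1-p_k)+\phi_1(\rho_k)p_k$ with $p_k:=\Prb_{X_*}(\tau_{\rho_k}<\xi_R)$, whence
\[
p_k \;\ge\; \frac{V_1(X_*)-\phi_1(R)}{\phi_1(\rho_k)-\phi_1(R)} \;\ge\; \frac{M-\phi_1(R)}{\phi_1(\rho_k)-\phi_1(R)} \;>\; 0.
\]

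Again by \emph{(p4)}, $V_2(X_{t\wedge\theta_k})-(t\wedge\theta_k)$ is a supermartingale, so $\E_{X_*}[t\wedge\theta_k]\ge\E_{X_*}V_2(X_{t\wedge\theta_k})-V_2(X_*)$; letting $t\to\infty$ (monotone convergence on the left, bounded convergence on the right, using $\theta_k<\infty$ a.s.\ and $V_2$ bounded on $A_k$) yields $\E_{X_*}\theta_k\ge\E_{X_*}V_2(X_{\theta_k})-V_2(X_*)$. Using $V_2\ge m_R$ on $\{|X|=R\}$, $V_2\ge\psi_2(\rho_k)>0$ on $\{|X|=\rho_k\}$, and the bound on $p_k$, one obtains
\[
\E_{X_*}\xi_R \;\ge\; \E_{X_*}\theta_k \;\ge\; \min(m_R,0)-V_2(X_*)+(M-\phi_1(R))\,\frac{\psi_2(\rho_k)}{\phi_1(\rho_k)-\phi_1(R)}\,.
\]
The first two terms are constants independent of $k$, $M-\phi_1(R)>0$, and the last factor diverges by the observation in the first paragraph; letting $k\to\infty$ forces $\E_{X_*}\xi_R=\infty$, which is \eqref{eq:inf:ret:tm} with this $M$.

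The step I expect to be the real obstacle --- and the reason this is genuinely more general than \cite{Won_66}, where uniform ellipticity was imposed --- is the possibility that, for a degenerate diffusion, $\theta_k=\infty$ on a set of positive probability (the process neither returns to $\{|X|\le R\}$ nor ever reaches radius $\rho_k$); this is precisely what the dichotomy above disposes of, since in that event $\E_{X_*}\xi_R=\infty$ is immediate. Everything else is bookkeeping: confirming the Itô correction terms are true martingales via boundedness on the compact annuli, justifying the $t\to\infty$ passages, and tracking signs so that $m_R$ (possibly negative) and $V_2(X_*)$ enter only as bounded $k$-independent errors while all the growth comes from $\psi_2(\rho_k)/(\phi_1(\rho_k)-\phi_1(R))\to\infty$, the mechanism encoded in \emph{(p1)}--\emph{(p3)}.
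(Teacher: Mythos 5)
Your proof is correct and establishes the result by the same Wonham annulus mechanism the paper uses, but with a different decomposition of the argument. The paper packages both test functions into a single comparison function $V(k,X)=\lambda_k V_1(X)-V_2(X)$, with $\lambda_k$ chosen (via \emph{(p3)}) so that $V(k,\cdot)\le 0$ on both boundary spheres of the annulus; a single application of Dynkin's formula, together with $\LL V(k,\cdot)\ge -1$ from \emph{(p4)}, then yields $V(k,X_*)\le\E_{X_*}\xi_R$ with $V(k,X_*)\to\infty$. You instead keep $V_1$ and $V_2$ separate and run two optional-stopping arguments on the stopped process: the bounded submartingale $V_1(X_{t\wedge\theta_k})$ gives an explicit lower bound on the escape probability $p_k=\Prb_{X_*}(\tau_{\rho_k}<\xi_R)$, and the supermartingale $V_2(X_{t\wedge\theta_k})-(t\wedge\theta_k)$ converts that bound into a lower bound on $\E_{X_*}\theta_k$. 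The two routes produce exactly the same divergent quantity $\psi_2(\rho_k)/(\phi_1(\rho_k)-\phi_1(R))$, dispose of the degenerate event $\Prb(\theta_k=\infty)>0$ in the same way, and both rely on boundedness on the compact annulus to legitimize the stochastic-integral and optional-stopping steps. What your decomposition buys is conceptual transparency: it exposes the mechanism as ``escape probability times time-cost per escape,'' with \emph{(p3)} making the product blow up. What the paper's single test function buys is brevity, at the price of hiding $p_k$ inside the sign analysis of $V(k,\cdot)$. The finer points you flag---that the stochastic-integral terms are genuine martingales on $A_k$, that the $t\to\infty$ passage uses monotone convergence on one side and bounded convergence on the other, that $m_R$ may be negative but contributes only a $k$-independent constant, and that $V_1(X_*)\ge M>\phi_1(R)$ forces $|X_*|>R$ so that $\xi_R$ is not trivially zero---are all handled correctly.
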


\begin{proof}
  First notice that, given $V_1,V_2 \in C^2(\RR^n; \RR)$ and $R$ satisfying
  \emph{(p1)-(p4)}, one can add a negative constant to $V_1$ to 
  obtain  
  \begin{align}
    V_1(X) \leq 0, \quad \text{ for every } |X| \leq R 
    \label{eq:v1:neg:inner}
  \end{align}
  and a positive constant to $V_2$  so that 
  \begin{align}
    V_2(X) \geq 0 \quad \text{ for every } X \in \RR^n.
    \label{eq:v2:alway:p}
  \end{align}
  Since an addition of constants does not affect \emph{(p1) -(p4)} we
  proceed assuming \eqref{eq:v1:neg:inner} and \eqref{eq:v2:alway:p}.

  Let us fix an arbitrary $|X_*| \geq R$ such that $V_1(X_*) > 0$.
  Invoking (p1) we can choose a sequence of points $x_k \in \RR^n$,
  $k = 1, 2, \cdots$ such that $x_1=X_*$, $R<|x_k| < |x_{k+1}|$ for
  all $k$ and such that $0 < V_1(x_k) \uparrow \infty$ as
  $k \rightarrow \infty$.  Let
  $\tau'_k = \inf\{ t\geq 0 \, : \, |X_t| \geq |x_k|\}$ and recalling
  \eqref{eq:return:ballz} define functions $u_k(X,t)$ by
  \begin{align}
    u_k(X, t) = \E_X (\tau_k' \wedge \xi_{R}\wedge t).  
  \end{align}        
  Note that, in particular, since $x_k$ is an increasing sequence, we
  have the relationship
  \begin{align}
    0 \leq u_k(X,t) \leq u_{k+1}(X, t) 
  \end{align}
  for all $X$ with $R\leq |X| \leq |x_k|$ and all $k \in \N$,
  $t\geq 0$.  

  Define $M_k = \max_{|Y|=|x_k|} V_1(Y)$.  Note that $M_1>0$ and by
  passing to the relevant subsequence of $x_n$'s via (p1) we can
  assume that $M_{k + 1} > M_k$ for all $k$.  Let
  \begin{align}
    \lambda_k := M_{k}^{-1}\min_{|Y|=|x_k|} V_2(Y) 
               = \frac{\min_{|Y|=|x_k|} V_2(Y)}{\max_{|Y|=|x_k|} V_1(Y)}
    \label{eq:lamb:def:n}
  \end{align}
  and consider the functions
  \begin{align}
    V(k, X) := \lambda_k V_1(X) - V_2(X) =
    \frac{\min_{|Y|=|x_k|} V_2(Y)}{\max_{|Y|=|x_k|} V_1(Y)}
    V_1(X) - V_2(X),
    \label{eq:Vn:def}
  \end{align}
  for each $k \in \N$.  In view of assumption (p3), after passing to a
  subsequence, and recalling our choice of $X^*$ such that
  $V_1(X^*) > 0$ we have that
  \begin{align}
    \lim_{k \to \infty} \lambda_k = \infty
    \quad \text{ and } \lim_{k \to \infty} V(k, X^*) = \infty.
    \label{eq:good:lamb}
  \end{align}
  Furthermore, with \eqref{eq:v1:neg:inner}, \eqref{eq:v2:alway:p}
  we see that $V(k, X)$ is nonpositive on the boundary of the annulus
  $\mathcal{A}_k := \{X\, : \, R < |X|< |x_k|\}$, namely
  \begin{align}
    V(k, X) \leq 0 
    \quad \text{ for every } 
    X \in \{ Y \in \RR^n : | Y | = R \text{ or } |Y| = |x_k|\},
    \label{eq:good:Vn}
  \end{align}
  for every $n$. 

  Next, by Dynkin's formula and then invoking (p4) to produces the
  final inequality, we have
  \begin{align}
    -\E_{X_*} V(k, X_{\tau_k' \wedge \xi_{R}\wedge t}) 
       &= - V(k, X_*) 
         - \E_{X_*} \int_0^{\tau_k' \wedge \xi_{R}\wedge t}  
               \LL V(k, X_s) \, ds \\
       &=  u_k(X_*, t)- V(k,  X_*)
         + \E_{X_*} \int_0^{\tau_k' \wedge \xi_{R} \wedge t}  
         (\LL(V_2- \lambda_k V_1)(X_s) -1) \, ds\\
       & \leq u_k(X_*,t) - V(k, X_*). 
  \end{align} 
  for any $k\in \N$, $t\geq 0$.  Note that if
  $\E_{X_*}( \xi_{R} \wedge \tau_k' )= \infty$ for some $k \in \N$,
  the desired result follows from the monotone convergence theorem
  after passing $k \to \infty$.  Thus, we are left with the case
  $\E_{X_*}(\xi_{R} \wedge \tau_{k}') < \infty$, and in particular
  $\Prb (\xi_{R} \wedge \tau_{k}' < \infty) = 1$ for all $k$.  For
  fixed $k$, $V(k, \cdot)$ is bounded and continuous on
  $\mathcal{A}_k$, and therefore by the monotone and dominated
  convergence theorem after passing $t \to \infty$ we obtain
  \begin{align}
    - \E_{X_*} V(k, X_{\tau_k' \wedge \xi_{R}})     
      \leq \E_{X_*}(\xi_{R} \wedge \tau_k') - V(k, X_*) \,.
  \end{align}
  Since $\xi_{R} \wedge \tau_{k}' < \infty$ is almost surely bounded,
  \eqref{eq:good:Vn} produces the inequality
  \begin{align}
    V(k, X_{*}) \leq \E_{X_*}(\xi_{R} \wedge \tau_k')
               \leq \E_{X_*}\xi_{R}
  \end{align}  
  valid for all $k\in \N$.  Thus invoking \eqref{eq:good:lamb} we
  conclude \eqref{eq:inf:ret:tm} completing the proof.
\end{proof}

In summary \cref{prop:stab} and \cref{thm:criterianon} provide a basis
for analyzing the expected return time to compact sets for general
diffusions of the form \eqref{eqn:sdegen}.  For our purposes here we
can then appeal to general results found in e.g \cite{Kliem_87, MT_12}
to conclude either the existence or the non-existence of an invariant
probability measure for $\mathcal{P}_t$.  Note however that, at this
step in the analysis, we further require that $\mathcal{P}_t$ maintain
certain support and regularity properties.

In order to restate the results from \cite{Kliem_87, MT_12}, we need
the following definitions.
\begin{Definition}
  \label{def:hypo:ellip}
  Suppose that $\mathcal{A}$ is a differential operator defined on an
  open subset $U\subseteq \RR^n$.  We say that $\mathcal{A}$ is
  \textbf{hypoelliptic} on $U$ if for any distribution $u$ defined on
  an open subset $V\subseteq U$ such that
  $\mathcal{A} u \in C^\infty(V)$, we have $u\in C^\infty(V)$.
\end{Definition}

\begin{Definition}
  \label{def:nice:diff}
  We say that $X_t$ satisfying~\eqref{eqn:sdegen} is \textbf{nice
    diffusion} if it is non-explosive as in \eqref{eq:no:blow} and the
  following conditions are met:
\begin{itemize}
\item[(i)]  $F\in C^\infty(\RR^n; \RR^n)$ and $G\in C^\infty(\RR^n ; M_{nk})$;
\item[(ii)] The operators
  $\LL, \LL^*, \LL\pm \partial_t, \LL^*\pm \partial_t$ are
  hypoelliptic on the respective domains
  $\RR^n, \RR^n, \RR^n\times(0, \infty), \RR^n\times(0, \infty)$ where
  $\LL^*$ denotes the formal adjoint of $\LL$ with respect to the
  $L^2(\RR^n; dx)$ inner product.
\item[(iii)] $\text{\emph{supp}}(\mathcal{P}_t(X, \, \cdot \,))=\RR^n$
  for all $t>0, X\in \RR^n$.\footnote{Recall that, given a probability
    measure $\mu$ on $\RR^n$,
  \begin{align}
   \text{supp}(\mu)
    := \{ x \in \RR^n: \mu(\{y : |y - x| < \epsilon\}) > 0, 
          \text{ for every } \epsilon > 0\}.   
  \end{align}
  In particular, $\text{supp}(\mu) = \RR^n$ if 
   $\mu$ is continuously distributed and its density is almost surely
  positive.
  }   
\end{itemize} 
\end{Definition}

Note that hypoellipticity of $\mathcal{A}$ on $U$ intuitively means
that $\mathcal{A}$ has a  local smoothing effect  on $U$ reminiscent
of elliptic operators.  Hypoellipticity of 
$\LL, \LL^*, \LL\pm \partial_t, \LL^* \pm \partial_t$ implies
their smoothing properties and in addition the probability density functions
of the associated stochastic differential equations exist and are 
smooth
 in all variables (forward, backward and time).  Furthermore if an invariant
probability measure exists, the hypoellipticity  guarantees
the existence and smoothness of an invariant probability density.
This is the reason we assume condition (ii) in Definition \ref{def:nice:diff}.

\begin{Proposition}
\label{prop:criteria}
Suppose that $X_t$ is a nice diffusion according to
\cref{def:nice:diff}.  Then we have the following:
\begin{itemize}
\item[(a)]  There is at most one invariant probability measure
  for $\mathcal{P}_t$.     
\item[(b)] $\mathcal{P}_t$ has an invariant probability measure if and
  only if there exists $R>0$ such that $\E_X \xi_{R} < \infty$ for all
  $X\in \RR^n$ and the mapping $X\mapsto \E_X\xi_R$ is bounded on
  compact subsets of $\RR^n$.  In the above, we recall that $\xi_{R}$
  is the return time defined in \eqref{eq:return:ballz}.
\end{itemize}
\end{Proposition}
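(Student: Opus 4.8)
The statement is essentially a translation of the Harris‑recurrence equivalence theorems of \cite{Kliem_87, MT_12} into the language of \cref{def:nice:diff}, so the plan is to extract from the ``nice diffusion'' hypotheses the three structural properties those theories require --- strong Feller regularity, topological irreducibility, and the fact that compact sets are ``small'' --- and then invoke the abstract results. Concretely, I would first record that hypoellipticity of $\LL - \partial_t$ forces each $\mathcal{P}_t(X, \cdot)$, $t > 0$, to have a jointly smooth density $p_t(X, Y)$, so $\mathcal{P}_t$ maps bounded measurable functions to (smooth, hence) continuous ones, i.e. $\mathcal{P}_t$ is strong Feller; that condition (iii) of \cref{def:nice:diff} says precisely that the process is topologically irreducible, $\mathcal{P}_t(X, O) > 0$ for every nonempty open $O$ and every $X$; and that combining smoothness and positivity of $p_t$ with irreducibility yields $p_t(X, Y) > 0$ for all $X, Y$, together with a uniform minorization $\inf_{X, Y \in \mathcal{K}} p_1(X, Y) > 0$ on every compact $\mathcal{K}$. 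Hence every compact set is petite for the time‑one skeleton chain, which is a strong Feller, irreducible, aperiodic $T$‑chain, and nonexplosivity \eqref{eq:no:blow} lets these discrete‑time facts be read as statements about $X_t$ itself.

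\textbf{Part (a).} Uniqueness then follows from Doob's theorem: a strong Feller, topologically irreducible Markov semigroup has at most one invariant probability measure (and if it exists it is ergodic and all $\mathcal{P}_t(X,\cdot)$, $t>0$, are mutually equivalent). One way to see it directly: two distinct invariant probability measures would possess mutually singular ergodic components whose (closed) supports are disjoint, but the strong Feller property together with irreducibility forces the support of any invariant probability measure to be all of $\RR^n$, a contradiction; see \cite{Kliem_87}.

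\textbf{Part (b).} For the reverse implication, assume there is $R > 0$ with $\E_X \xi_R < \infty$ for every $X$ and with $X \mapsto \E_X \xi_R$ bounded on compact sets. Since $\{|X| \leq R\}$ is petite and reached in finite expected time uniformly over compacts, the drift/return‑time criterion for continuous‑time Markov processes (cf. \cite{MT_12}, or the classical cycle construction of Khasminskii \cite{Khas_11, RB_06}) produces an invariant probability measure: fix a reference probability measure $\nu$ supported in $\{|X| \leq R\}$, let $\sigma$ be the first return time to $\{|X|\le R\}$ following an excursion, note $\E_\nu \sigma < \infty$ by the local boundedness hypothesis and irreducibility, and verify that $A \mapsto (\E_\nu \sigma)^{-1}\, \E_\nu \int_0^{\sigma} \mathbf{1}_A(X_s)\, ds$ is $\mathcal{P}_t$‑invariant and a probability measure. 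For the forward implication, assume an invariant probability measure $\pi$ exists; by (a) it is unique and ergodic, hypoellipticity of $\LL^*$ gives it a smooth density, and irreducibility forces that density to be strictly positive, so $\pi(\{|X| \leq R\}) > 0$ for every $R > 0$. Fixing any such $R$, the ball $\{|X|\le R\}$ is a petite set of positive $\pi$‑measure, so $X_t$ is positive Harris recurrent, and the structure theory for such processes --- Kac's formula combined with the regularity afforded by the $T$‑chain/strong Feller property, as developed in \cite{Kliem_87, MT_12} --- yields $\E_X \xi_R < \infty$ for every $X$ and $\sup_{X \in \mathcal{C}} \E_X \xi_R < \infty$ for every compact $\mathcal{C}$.

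\textbf{Main obstacle.} The delicate point is the last step: upgrading ``$\pi$ exists'' to finiteness \emph{and local boundedness} of $X \mapsto \E_X \xi_R$. Finiteness from $\pi$‑almost every initial condition is immediate from Harris recurrence, but obtaining it from \emph{every} $X$, and uniformly over compact sets, genuinely uses the petite‑set machinery: the uniform lower bound on $\mathcal{P}_1(X, \{|X|\le R\})$ over $X$ in a compact set shows the process enters a large ball within a geometrically bounded number of unit steps, after which Kac's formula controls the expected return time. Rather than reprove this, I would cite \cite{Kliem_87, MT_12} for the continuous‑time positive‑Harris equivalence and simply check that \cref{def:nice:diff} supplies exactly their hypotheses.
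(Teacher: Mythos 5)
Your proposal is correct and takes essentially the same route as the paper: the paper likewise observes that hypoellipticity plus condition (iii) of \cref{def:nice:diff} supply the strong Feller/irreducibility/petite-set structure, proves uniqueness via ergodic decomposition (which is the same mutual-singularity argument you sketch), and handles part (b) by citing Khasminskii's cycle argument for the $\Leftarrow$ direction and the recurrent/transient dichotomy of \cite{Kliem_87} (the same Harris-recurrence theory you invoke in Meyn--Tweedie language) for the $\Rightarrow$ direction. The only cosmetic difference is that your "main obstacle" paragraph gestures at a minorization-plus-geometric-trial argument that is a bit looser than what is really needed (starting points that leave every compact set after one step must be controlled, which is where Kliemann's dichotomy and Kac's formula do real work), but since both you and the paper defer the details to \cite{Kliem_87, MT_12}, this does not affect the conclusion.
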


The proof of \cref{prop:criteria} combines results scattered in the
literature; cf. \cite{MT_57, Khas_60, Kliem_87, RB_06, Khas_11,
  MT_12}.  Part (a) is a standard consequence of ergodic
decomposition, see, for example~\cite[Proposition~8.1]{RB_06}.  For
part (b), if there exists $R>0$ such that $\E_X \xi_R< \infty$ for all
$X\in \RR^n$ and the mapping $X\mapsto \E_X \xi_R$ is bounded on
compact subsets of $\RR^n$, the unique invariant probability measure
can be constructed using Khasminskii's cycle argument as
in~\cite{Khas_11, RB_06}.  The remaining implication in part (b) is
more subtle, as it relies on the dichotomy between transient points
and recurrent points for degenerate diffusions.  This was established
in~\cite{Kliem_87}.

We next recall a set of criteria which can be used to establish the
smoothness and positivity hypothesis of \cref{def:nice:diff} required
for \cref{prop:criteria}. First,  we formulate in our setting
\cite[Theorem 2.9]{HM_15i} which is a combination of H\"{o}rmander's
hypo-ellipticity theorem~\cite{Hor_67}, ensuring the existence and
smoothness of a density (with respect to Lebesgue measure on $\RR^n$),
with the support theorems of Stroock and
Varadhan~\cite{SV_72ii,SV_72i} relating positivity of the density to
controllability.  By the results from \cite{HM_15i},  for~\eqref{eqn:lor63}, one can use certain
Lie brackets as in~\cite{Hor_67} to obtain both the regularity of
the density and support of the transition measure.

To formulate the result let us introduce  preliminary
definitions and notations following as closely as possible the
formulations in \cite{HM_15i}.  Recall that the \emph{Lie
  bracket} of (smooth) vector fields
\begin{align}
  U(X) = \sum_{j = 1}^n U^j(X) \frac{\partial}{\partial X_j},
  \qquad
  W(X) = \sum_{j = 1}^n W^j(X) \frac{\partial}{\partial X_j},
  \label{eq:Vec:diff:geo}
\end{align}
is given as
\begin{align}
  [U, W](X) 
  = \sum_{j = 1}^n 
  \sum_{k = 1}^n \left(
      U^k(X) \frac{\partial W^j(X)}{\partial X_k}
      -  W^k(X) \frac{\partial U^j(X)}{\partial X_k}
     \right)  \frac{\partial }{\partial X_j}.
\end{align}
We then introduce, for any vector fields $U$ and $W$
\begin{align}
  \ad^0U(W) = W,
  \quad
  \ad^1U(W) = [U,W],
  \quad
  \ad^m U(W) := \ad^1U(\ad^{m-1}U(W)),
  \text{ for } m \geq 2.
\end{align}
When $W$ is a polynomial vector field (that is to say $W$ depends
polynomially on the components of $X$), for any $X \in \RR^n$ we
denote
\begin{align}
  \ndeg(X,W) := \max_{j = 1, \ldots, n} \mbox{deg}(p_j)
  \quad \text{ where } p_j(\lambda) := W_j(\lambda X).
  \label{eq:rel:deg:cond}
\end{align}
For any collection of vector fields $\mathcal{G}$
on $\RR^n$ we define
\begin{align}
  \mbox{cone}_{\geq 0}(\mathcal{G}) 
   = \left\{
      \sum_{j = 1}^N \lambda_j U_j :
  \text{ for any finite collections } \{\lambda_1, \ldots, \lambda_N\}
  \subset [0,\infty), \{U_1, \ldots, U_N\} \subset \mathcal{G}
  \right\}.
\end{align}

For simplicity and in the view of 
\eqref{eqn:sdegen} we restrict to the case when the diffusion coefficients $G$ are
independent of $X$ and the drift $F$ is a polynomial.  Let
\begin{align}
  \mathcal{G}_0 := \spn\{ G_1, \ldots, G_k \}
\end{align}
and starting at $j = 1$ we define\footnote{Note that in
  \eqref{eq:G1:brak} and \eqref{eq:Gjp1:brak} we treat constant vector
  fields $G$ as a vector in $\RR^n$ when computing $\ndeg(G,F)$.}
\begin{align}
  \mathcal{G}_1^{O} &:= \mathcal{G}_0  \cup
  \{ \ad^{\ndeg(G,F)} G(F) :  G \in \mathcal{G}_0, \ndeg(G, F) \text{ is odd} \}
  \notag\\
  \bar{\mathcal{G}}_1^{O} &:= \{G \in \mathcal{G}_1^{O}: G \text{ is a constant
                      vector field} \}
  \notag\\
  \mathcal{G}_1^{E} &:= 
  \{ \ad^{\ndeg(G,F)} G(F) :  G \in \mathcal{G}_0, \ndeg(G, F) \text{ is even} \}
  \notag\\
  \mathcal{G}_1 &= \spn(\mathcal{G}_1^{O}) 
                  +   \mbox{cone}_{\geq 0}(\mathcal{G}_1^{E}).
                  \label{eq:G1:brak}
\end{align}
We then proceed iteratively to define, for $j \geq 1$
\begin{align}
  \mathcal{G}_{j+1}^O &:= \mathcal{G}_{j}^O \cup
  \{ \ad^{\ndeg(G,F)} G(H) :  
       G \in \bar{\mathcal{G}}_j^O, H \in \mathcal{G}_j,
      \ndeg(G, H) \text{ is odd} \}
  \notag\\
  \bar{\mathcal{G}}_{j+1}^{O} &:= 
   \{G \in \mathcal{G}_{j+1}^{O}: G \text{ is a constant
                      vector field} \}
  \notag\\
  \mathcal{G}_{j+1}^E &:= \mathcal{G}_{j}^E \cup
  \{ \ad^{\ndeg(G,F)} G(H) :  
       G \in \bar{\mathcal{G}}_j^O, H \in \mathcal{G}_j,
      \ndeg(G, H) \text{ is even} \}
  \notag\\
  \mathcal{G}_{j+1} &:= \spn(\mathcal{G}_{j+1}^{O}) 
        + \mbox{cone}_{\geq 0}(\mathcal{G}_{j+1}^{E}).
                  \label{eq:Gjp1:brak}
\end{align}

The following summarizes results in \cite{HM_15i}; cf. 
\cite{Hor_67,SV_72ii,SV_72i, RB_06}.

\begin{Theorem}
  \label{thm:whor:stuck:var}
  Consider $\{X_t\}_{t \geq 0}$ solving \eqref{eqn:sdegen} under the assumption
  that $F$ is a polynomial, that $G_k$ is constant i.e.
  $X$-independent and suppose furthermore that the resulting dynamics
  is non-explosive as in \eqref{eq:no:blow}.  Assume that   \begin{align}
    \spn \biggl\{ H \in \bigcup_{j \geq 1}  \mathcal{G}_j^O 
            : H \text{ is a constant vector field} \biggr\}
      = \RR^n \,,
    \label{eq:spanning:cond}
  \end{align}
  then $\{X_t\}$ is a nice diffusion in the sense of \cref{def:nice:diff}.
\end{Theorem}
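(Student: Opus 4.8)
The plan is to verify directly the three requirements in \cref{def:nice:diff}, reducing conditions (ii) and (iii) to the hypoellipticity and support results assembled in \cite[Theorem 2.9]{HM_15i} (which in turn rest on \cite{Hor_67, SV_72ii, SV_72i}). Condition (i) is immediate: a polynomial drift $F$ and an $X$-independent diffusion $G$ are automatically $C^\infty$. Non-explosivity is part of the hypothesis, and it is precisely what is needed to define the Markov semigroup $\mathcal{P}_t$ for all $t\geq 0$ and hence to speak of (ii)--(iii) globally.

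The central observation is that every vector field appearing in the sets $\mathcal{G}_j^O$, $\mathcal{G}_j^E$, $\mathcal{G}_j$ of \eqref{eq:G1:brak}--\eqref{eq:Gjp1:brak} is a nonnegative linear combination of iterated Lie brackets built from $F$ and $G_1,\dots,G_k$: indeed $\ad^m G(H) = [G,[G,\dots,[G,H]\dots]]$ is such a bracket, and the operations $\spn(\cdot)$ and $\mbox{cone}_{\geq 0}(\cdot)$ only form (nonnegative) linear combinations. Since each $G$ entering a bracket of the form $\ad^{\ndeg(G,H)}G(\cdot)$ is a constant vector field, bracketing against it amounts to directional differentiation, and the exponent $\ndeg(G,H)$ is tuned precisely so that the construction isolates constant vector fields among the resulting iterated brackets --- this is exactly why the subfamilies $\bar{\mathcal{G}}_j^O$ of constant fields are singled out. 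Consequently the constant vector fields collected in $\bigcup_{j\geq 1}\mathcal{G}_j^O$ are themselves iterated Lie brackets of $F$ and the $G_l$, and the hypothesis \eqref{eq:spanning:cond} asserts exactly that a family of such brackets spans $\RR^n$. Because a constant vector field takes the same value at every point, H\"ormander's bracket condition for \eqref{eqn:sdegen} then holds at every $X\in\RR^n$; H\"ormander's theorem \cite{Hor_67} yields hypoellipticity of $\LL$, $\LL^*$ and of $\LL\pm\partial_t$, $\LL^*\pm\partial_t$ on the domains listed in \cref{def:nice:diff}, together with the existence and smoothness of the forward, backward and space-time transition densities. This is condition (ii).

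For condition (iii) one invokes the Stroock--Varadhan support theorem \cite{SV_72ii, SV_72i}: $\mathrm{supp}(\mathcal{P}_t(X,\cdot))$ coincides with the closure of the set of points reachable at time $t$ by the associated control system, in which one steers freely (with either sign) along the constant directions $G_l$ while the drift $F$ acts forward in time. The refined bookkeeping in \eqref{eq:G1:brak}--\eqref{eq:Gjp1:brak}, separating the $\spn$ part from the $\mbox{cone}_{\geq 0}$ part according to the parity of $\ndeg(G,\cdot)$, is designed precisely to track which bracket directions are accessible with both signs of the control (the odd case, placed in the span) and which only with a single sign (the even case, placed in the cone); together with \eqref{eq:spanning:cond} this forces the reachable set to contain a full-dimensional neighborhood of any target point at every $t>0$, whence $\mathrm{supp}(\mathcal{P}_t(X,\cdot))=\RR^n$. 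Assembling this with (i)--(ii) and the assumed non-explosivity shows that $\{X_t\}$ is a nice diffusion in the sense of \cref{def:nice:diff}.

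The only genuinely delicate point is the parity/cone accounting just described --- matching the iterated-bracket hierarchy to \emph{two-sided} controllability in the Stroock--Varadhan theorem, since a bracket involving $F$ need not be reachable with both signs of the driving controls. This is exactly the subtlety resolved in \cite{HM_15i}, so at the level of the present paper the proof reduces to checking that the construction \eqref{eq:G1:brak}--\eqref{eq:Gjp1:brak} and the hypothesis \eqref{eq:spanning:cond} coincide with those used there and then quoting their theorem; no new estimates are required, and the write-up is correspondingly short.
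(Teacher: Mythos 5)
Your proposal is correct and matches the paper's treatment: the paper gives no proof of this theorem, explicitly introducing it as one that ``summarizes results in \cite{HM_15i}'' and reducing everything to \cite[Theorem 2.9]{HM_15i} together with H\"ormander~\cite{Hor_67} and Stroock--Varadhan~\cite{SV_72ii,SV_72i}, which is exactly the reduction you perform. The extra sketch you give of the constant-vector-field/hypoellipticity step and the parity/cone bookkeeping behind the controllability claim is a correct elaboration of the cited machinery, though it is not something the paper itself supplies.
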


\begin{Remark}
  The condition \eqref{eq:spanning:cond} is special case of the
  H\"ormander (parabolic) sum of squares condition which asserts that if vector
  fields produced by the iterated Lie brackets
  \begin{align}
    G_1, \ldots, G_k, [G_1, F], \ldots, [G_k,F], [[G_1, F], F], 
    [[G_1,F],G_1] \ldots
  \end{align}
  span all of $\RR^n$ then the generator $\mathcal{L}$
  given by \eqref{fpe} along with $\mathcal{L}^*$,
  $\mathcal{L} \pm \partial_t$, $\mathcal{L}^* \pm \partial_t$ are all
  hypo-elliptic as in \cref{def:hypo:ellip}.  See
  \cite{hormander1967hypoelliptic} and more recently the treatment in
  \cite{RB_06}.
\end{Remark}

\section{Non-explosivity and Uniqueness Results}
\label{sec:uniqueness:Lor}

In this section we now return to the specific
setting~\eqref{eqn:lor63} and establish, subject to a non-degeneracy
condition on the noise, the hypo-ellipticity and irreducibility of
\eqref{eqn:lor63}.  Specifically we  establish that
\eqref{eqn:lor63} satisfies \cref{def:nice:diff} via
\cref{thm:whor:stuck:var} when $\gamma_1 > 0$ and at least one of
$\gamma_2, \gamma_3 $ is strictly positive.  

Let $\{X_t\}_{t \geq 0}$ denote
the process $(x_t, y_t, z_t)$ solving~\eqref{eqn:lor63}, and we will reuse the notations
$\tau_n, \tau, \LL, \mathcal{P}_t$, etc. from \cref{sec:methods} for $\{X_t\}_{t \geq 0}$.  In
particular note that \eqref{eqn:lor63} produces the infinitesimal
generator
\begin{align}
    \mathcal{L} 
       = \sigma(y-x) \partial_x+ [x(\rho-z)-y] \partial_y + [xy-\beta z] \partial_z
         +\gamma_1 \partial_x^2+ \gamma_2 \partial_y^2 + \gamma_3 \partial_z^2.
    \label{eq:Gen:L63:gen:case}
\end{align}

We now formulate the first result in this section.

\begin{Proposition}
\label{prop:noexp}
For any values $\sigma, \rho, \beta \in \RR$ and any
$\gamma_1, \gamma_2, \gamma_2 \geq 0$ the process $\{X_t\}_{t \geq 0}$
defined by \eqref{eqn:lor63} is nonexplosive in the sense of
\eqref{eq:no:blow}.  Moreover, if $\sigma > 0$ and either
$\gamma_1, \gamma_2>0$ or $\gamma_1, \gamma_3>0$, then
\eqref{eqn:lor63} is a nice diffusion in the sense of
\cref{def:nice:diff}.  Hence, in particular, the hypotheses of
\cref{prop:criteria} are satisfied for \eqref{eqn:lor63} if 
$\gamma_1, \gamma_2>0$ or $\gamma_1, \gamma_3>0$.
\end{Proposition}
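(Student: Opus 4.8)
The plan is to prove the two assertions of \cref{prop:noexp} separately, with the non-explosivity coming from \cref{prop:stab}~(a) applied to the natural energy $H$, and the ``nice diffusion'' claim coming from \cref{thm:whor:stuck:var} via an explicit Lie-bracket computation.

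\medskip

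\noindent\textbf{Non-explosivity.}
First I would take the natural Lyapunov function $H(x,y,z) = x^2+y^2+z^2 - 2(\sigma+\rho)z$ from \eqref{eq:lyp:base:intro}. It is smooth, and since the linear term is dominated by the quadratic terms at infinity we have $H(X) \to \infty$ as $|X| \to \infty$ (after possibly adding a constant to make it nonnegative, though \cref{prop:stab}~(a) does not even require nonnegativity). A direct computation with the generator \eqref{eq:Gen:L63:gen:case} gives exactly \eqref{eq:gen:lyp:b:intro}, namely
\begin{align}
  \mathcal{L} H = -2(\sigma x^2 + y^2) - 2\beta z^2 + 2\beta(\sigma+\rho) z + 2(\gamma_1+\gamma_2+\gamma_3).
\end{align}
(The key cancellation is that the cubic terms $-2xyz$ coming from $x(\rho-z)\cdot 2y$ and $+2xyz$ coming from $xy \cdot 2z$ cancel, as do the $\rho$-linear terms against part of the shift.) Now regardless of the sign of $\beta$, the right-hand side is bounded above by $c(x^2+y^2+z^2) + d$ for suitable $c,d>0$: if $\beta \le 0$ the term $-2\beta z^2 = 2|\beta|z^2$ is absorbed into $cH$ up to lower order, and the linear term $2\beta(\sigma+\rho)z$ and the constant are handled by enlarging $d$; if $\beta>0$ it is even easier. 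Hence $\mathcal{L} H \le c H + d$ globally, and \cref{prop:stab}~(a) yields \eqref{eq:no:blow}. I would carry out this estimate carefully only enough to exhibit the constants, since it is routine.

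\medskip

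\noindent\textbf{Nice diffusion via Hörmander-type brackets.}
Assume now $\sigma>0$ and, say, $\gamma_1, \gamma_2 > 0$ (the case $\gamma_1,\gamma_3>0$ being symmetric in spirit). Here $G$ is $X$-independent and $F$ is polynomial, so \cref{thm:whor:stuck:var} applies once I verify the spanning condition \eqref{eq:spanning:cond}. The noise vector fields are $G_1 = \sqrt{2\gamma_1}\,\partial_x$ and $G_2 = \sqrt{2\gamma_2}\,\partial_y$ (up to the harmless positive scalars I may drop), so $\mathcal{G}_0$ contains the constant fields $\partial_x$ and $\partial_y$; that already gives two of the three needed directions. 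The point is to produce a constant vector field with a nonzero $\partial_z$-component. With $F = \sigma(y-x)\partial_x + [x(\rho-z)-y]\partial_y + [xy-\beta z]\partial_z$, one computes $[\partial_x, F] = -\sigma\partial_x + (\rho-z)\partial_y + y\,\partial_z$ and $[\partial_y, F] = \sigma\partial_x - \partial_y + x\,\partial_z$. For $G=\partial_x$ one has $\ndeg(\partial_x, F)=1$ (the $x$-components of $F(\lambda X)$ are degree one), so $\ad^1\partial_x(F) = [\partial_x,F] \in \mathcal{G}_1^O$, and it is not constant. Taking a further bracket $[\partial_y, [\partial_x, F]] = [\partial_y,\, -\sigma\partial_x+(\rho-z)\partial_y+y\partial_z] = \partial_z$, a constant vector field in the $\partial_z$ direction; one checks this lands in $\mathcal{G}_2^O$ via the iterative rule \eqref{eq:Gjp1:brak} with the odd-degree bracket of the constant field $G=\partial_y \in \bar{\mathcal{G}}_1^O$ against $H = [\partial_x,F] \in \mathcal{G}_1$ (here $\ndeg(\partial_y, H) = 1$, odd). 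Thus $\{\partial_x, \partial_y, \partial_z\} \subset \bigcup_j \mathcal{G}_j^O$ are constant and span $\RR^3$, so \eqref{eq:spanning:cond} holds. For the case $\gamma_1,\gamma_3>0$: now $\mathcal{G}_0 \ni \partial_x, \partial_z$, and I would instead bracket to get a $\partial_y$ direction — $[\partial_z, F] = -x\,\partial_y - \beta\,\partial_z$ is not constant, but a further bracket with $\partial_x$ gives $[\partial_x, [\partial_z, F]] = -\partial_y$, the missing constant direction. Either way \cref{thm:whor:stuck:var} gives that \eqref{eqn:lor63} is a nice diffusion, and since non-explosivity was already shown, the hypotheses of \cref{prop:criteria} are met.

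\medskip

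\noindent\textbf{Main obstacle.}
The genuinely delicate point is not the algebra of the brackets — those are short — but making sure the brackets I use are actually \emph{admissible} in the somewhat intricate inductive construction \eqref{eq:G1:brak}--\eqref{eq:Gjp1:brak}, which only permits bracketing \emph{constant} fields $G \in \bar{\mathcal{G}}_j^O$ against members of $\mathcal{G}_j$, and only raises by the power $\ndeg(G,\cdot)$. I would need to track the $\ndeg$ values at each stage (all are $1$ here because differentiating in a single Cartesian direction drops the relevant degree to one) and confirm each required field appears at the stated level. A secondary subtlety is the $\rho=0$ boundary case and, more importantly, checking that the construction does not secretly need the even-degree ``cone'' part — in our situation every bracket we need has odd $\ndeg$, so only the span part $\spn(\mathcal{G}_j^O)$ is used, which simplifies matters. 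Given these checks, the proposition follows.
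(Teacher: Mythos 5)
Your proposal is correct and follows essentially the same route as the paper: non-explosivity via \cref{prop:stab}~(a) applied to $H$ with \eqref{eq:gen:lyp:b:intro} and Young's inequality, and the nice-diffusion property via \cref{thm:whor:stuck:var} with a two-step Lie bracket producing the missing constant direction. The only cosmetic difference is in the $\gamma_1,\gamma_3>0$ case, where you form $[\partial_x,[\partial_z,F]]$ while the paper forms $[G_3,[G_1,F]]$; these agree up to sign (and scaling) by the Jacobi identity since $[\partial_x,\partial_z]=0$, and both are admissible in the scheme \eqref{eq:G1:brak}--\eqref{eq:Gjp1:brak} with all relevant $\ndeg$ values equal to $1$.
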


\begin{proof}
  We first prove that $\{X_t\}_{t \geq 0}$ is non-explosive with the
  aid of \cref{prop:stab}.  Defining $H$ as in
  \eqref{eq:lyp:base:intro} we find that \eqref{eq:gen:lyp:b:intro}
  holds.  Thus taking $V = H$ we obtain \eqref{eq:LV:does:not:blow}
  from \eqref{eq:gen:lyp:b:intro} with Young's inequality, and the
  first assertion follows.

 To prove that $\{X_t\}_{t \geq 0}$ is a nice diffusion we
  proceed via \cref{thm:whor:stuck:var}.   Adopting
  the geometric notations as in \eqref{eq:Vec:diff:geo} we have
  \begin{align}
   F = \sigma(y-x) \partial_x+ [x(\rho-z)-y] \partial_y 
    + [xy-\beta z] \partial_z,
    \quad
  G_1 = \sqrt{2 \gamma_1} \partial_x,
    \quad
  G_2 = \sqrt{2 \gamma_2} \partial_y,
    \quad
  G_3 = \sqrt{2 \gamma_3} \partial_y.
  \end{align}
  Our task is now to exhibit a sequence of allowable Lie brackets
  between these fields to obtain the spanning condition
  \eqref{eq:spanning:cond}.  

  Start with the case $\gamma_1, \gamma_2 > 0$ and by viewing $G_1$ as
  the vector $( \sqrt{2 \gamma_1}, 0, 0)^T$, we have
  $F(\lambda G) = (-\lambda \sigma \sqrt{2 \gamma_1}, \lambda \rho
  \sqrt{2 \gamma_1}, 0)^T$, and therefore, cf. 
  \eqref{eq:rel:deg:cond}, $\ndeg(G_1, F)=1$.  Hence, by
  \eqref{eq:G1:brak}, we find that
\begin{align}
  G_1':=\text{ad}^1G_1(F)&=[G_1, F] 
     = -\sqrt{2 \gamma_1}\sigma \partial_x 
       + \sqrt{2 \gamma_1}(\rho-z) \partial_y 
       + \sqrt{2 \gamma_1}y \partial_z \in \mathcal{G}_1^O 
   \label{eq:first:brk}
\end{align}  
Next, from $\ndeg(G_2, G_1')=1$ and 
\eqref{eq:Gjp1:brak} follows
\begin{align}
  \tilde{G}_3:= \text{ad}^1 G_2(G_1') 
  = [G_2, G_1'] = \sqrt{2 \gamma_1 \gamma_2} \partial_z \in \mathcal{G}_2^O. 
\end{align}
Thus we have found $G_1, G_2, \tilde{G}_3 \in \bigcup_{j \geq 1} \mathcal{G}_j^O$
which together span $\RR^3$ and hence satisfy \eqref{eq:spanning:cond},
completing the proof in the first case.

Next, assume $\gamma_1, \gamma_3 > 0$.  As above again
$\ndeg(G_1, F)=1$ and \eqref{eq:first:brk} holds true.  On the other
hand, $\ndeg(G_3, G_1')=1$ and we compute
\begin{align}
  \tilde{G}_2 := \text{ad}^1 G_3(G_1') 
  = [G_3, G_1'] = -\sqrt{2 \gamma_1 \gamma_3} \partial_y \in \mathcal{G}_2^O. 
\end{align}
Here, we  found the spanning set
$G_1, \tilde{G}_2, G_3 \in \bigcup_{j \geq 1} \mathcal{G}_j^O$ satisfying
\eqref{eq:spanning:cond} as required by \cref{thm:whor:stuck:var}.
The proof is now complete.
\end{proof}

\section{Positive Recurrence in the Absence of Damping}
\label{sec:exist}

In this section, we study the dynamics~\eqref{eqn:lor63} in the case
when $\beta=0$ and $\gamma_1 > 0$.  Our goal is to show that
\eqref{eqn:lor63} has globally finite expected returns to some compact
set by constructing a Lyapunov function $V$ satisfying the condition
\eqref{eq:cru:Lyp:bound} in \cref{prop:stab} part (b).  In turn, this
result immediately implies the existence part of \cref{thm:main}~(ii)
as well as the uniqueness in the case when either
$\gamma_1, \gamma_2>0$ or $\gamma_1, \gamma_3 >0$ by way of
Proposition~\ref{prop:noexp}.

We state the main result of this section precisely as follows:
\begin{Proposition}
  \label{prop:pos:rec}
  Consider \eqref{eqn:lor63} in the case when $\sigma > 0$, $\beta = 0$,
  $\rho \in \RR$, $\gamma_1 > 0$ and $\gamma_2, \gamma_3 \geq 0$.  Then,
  there exists an $R > 0$  such that
  for any $S > 0$
  \begin{align}
    \sup_{|X| \leq S} \E_X \xi_{R} < \infty\,,
    \label{eq:uni:bound:comp:xi}
  \end{align}
  where $\xi_{R}$ is return time to the ball of radius $R$ as defined
  in \eqref{eq:return:ballz}.  Furthermore, when we make the
  additional assumption that either $\gamma_2 > 0$ or $\gamma_3 > 0$
  then \eqref{eqn:lor63} has a unique invariant probability measure.
\end{Proposition}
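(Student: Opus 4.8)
The plan is to construct a nonnegative $C^2$ function $V:\RR^3\to\RR$, together with constants $c,d>0$ and a compact set $\mathcal{K}\subset\RR^3$, such that $\mathcal{L}V\le -c+d\,\indFn{\mathcal{K}}$ on all of $\RR^3$; granting this, \cref{prop:noexp} supplies non-explosivity, so \cref{prop:stab}(b) applies and yields $\E_X\xi_{\mathcal{K}}\le V(X)/c$. Choosing $R$ large enough that $\mathcal{K}$ lies in the closed ball of radius $R$ we have $\xi_R\le\xi_{\mathcal{K}}$, hence $\E_X\xi_R\le V(X)/c$, and the right-hand side is bounded on $\{|X|\le S\}$ by continuity of $V$; this is exactly \eqref{eq:uni:bound:comp:xi}. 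The uniqueness assertion then costs nothing more: when $\gamma_1>0$ and in addition $\gamma_2>0$ or $\gamma_3>0$, \cref{prop:noexp} says \eqref{eqn:lor63} is a nice diffusion, so the finiteness of $X\mapsto\E_X\xi_R$ just obtained, together with its local boundedness, places us in the hypotheses of \cref{prop:criteria}, whose two parts give both existence and uniqueness of the invariant probability measure.

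Thus everything reduces to the Lyapunov construction. The natural first attempt is to modify the energy $H$ of \eqref{eq:lyp:base:intro}: with $\beta=0$, \eqref{eq:gen:lyp:b:intro} becomes $\mathcal{L}H=-2(\sigma x^2+y^2)+2(\gamma_1+\gamma_2+\gamma_3)$, which is $\le -c$ as soon as $\sigma x^2+y^2$ is sufficiently large. The only obstruction is the region $\mathcal{D}$ where $x,y$ stay bounded while $|z|\to\infty$, and the strategy is to seek $V=H+\psi$ with $\psi$ a correction that is dominated by the strongly negative part of $\mathcal{L}H$ outside $\mathcal{D}$ while supplying the missing negativity of $\mathcal{L}$ inside $\mathcal{D}$. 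The shape of $\psi$ is read off from the dynamics near the point at infinity, and it is convenient to treat $z\to+\infty$ and $z\to-\infty$ separately, introducing rescalings of $(x,y)$ and of time adapted to the size of $|z|$ so as to parse the leading behaviour cleanly.

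For $z\to+\infty$, on the appropriate timescale the $(x,y)$-subsystem is, to leading order, a linearly damped oscillator with frequency of order $\sqrt{z}$ driven by the additive noise (nondegenerate since $\gamma_1>0$); the stationary distribution of this $z$-frozen linear system satisfies $\E[xy]<0$, strictly, so that through $\dot z=xy$ (recall $\beta=0$) the slow variable $z$ feels an order-one negative drift, i.e.\ it is pushed back toward the origin. To upgrade this averaged statement to a pointwise bound one adds a corrector: solving the Poisson equation for the frozen fast generator produces a quadratic form in $(x,y)$ with $z$-dependent coefficients of size $O(1/z)$ whose generator cancels the oscillatory part of $xy$ up to errors that are small on $\mathcal{D}$, so that a term modelled on $\lambda z$ plus this corrector has generator $\le -c$ on the part of $\mathcal{D}$ with $z\gg 1$. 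For $z\to-\infty$ the picture is reversed: the Jacobian of the $(x,y)$-drift has determinant $\sigma(1-\rho+z)$, which is negative for $z$ very negative, so the linearization is a saddle and $(x,y)$ leaves any bounded set; along the unstable direction one has $xy\ge 0$, which through $\dot z=xy$ again pushes $z$ up toward the origin. Away from a conical neighbourhood of the $z$-axis, $\sigma x^2+y^2$ is already large enough for $\mathcal{L}H\le -c$, so $\psi$ need only furnish negativity in that thin neighbourhood, which is arranged by a term built on the function $M$ of \eqref{eq:z:cru:test} (whose generator, by \eqref{eq:gen:M:test} with $\beta=0$, is $2\sigma x^2-\gamma_1$), corrected so that the noise-induced kick-off of $(x,y)$ off the axis is accompanied by a strictly negative drift.

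The main difficulty is making all of this rigorous and global. First, one must choose the exponents in the $(x,y)$- and time-rescalings so that the corrector terms are genuinely subleading and every error term in $\mathcal{L}(H+\psi)$ is dominated by the $-c$ produced by the main terms; pinning down these exponents is the heart of the matter. Second, the regime $z\to-\infty$ near the $z$-axis is delicate precisely because the deterministic flow there is unstable, so one must verify that the noise together with the corrector yields a strictly negative — not merely non-positive — drift. Third, and most laborious, the asymptotic analysis delivers $\psi$ only as a piecewise prescription on dyadic and conical charts, and these pieces must be interpolated into a single globally $C^2$ function; one then has to check that the first- and second-order terms generated by the gluing cutoffs remain of strictly lower order than the gained negativity — in particular that cutting off in the $(x,y)$-variables a function growing in $z$ does not resurrect an uncontrolled $O(z^2)$ term through the $-xz$ coupling in the $y$-equation. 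Once such a $V=H+\psi$ is in hand, the proof closes via the reduction of the first paragraph.
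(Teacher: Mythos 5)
Your top-level reduction is correct and matches the paper: once a nonnegative $C^2$ Lyapunov function $V$ satisfying $\LL V \leq -c + d\,\indFn{\mathcal{K}}$ is in hand, \cref{prop:noexp} gives non-explosivity, \cref{prop:stab}(b) gives $\E_X\xi_{\mathcal{K}} \leq V(X)/c$ and hence \eqref{eq:uni:bound:comp:xi}, and \cref{prop:criteria} supplies existence and uniqueness under the extra nondegeneracy already established in \cref{prop:noexp}.

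The decisive step, however — actually producing the correction $\psi$ — is only sketched, and the sketch rests on a decomposition and heuristics that do not lead to the construction that works. You split the large-$|z|$ region by the sign of $z$ and invoke averaging (a fast damped oscillator for $z\to+\infty$) and saddle instability (for $z\to-\infty$). The paper's Lyapunov function instead splits by the size of $|x||z|^{1/3}$, symmetrically in the sign of $z$, and uses two sign-insensitive perturbations: $\psi_1 = \kappa_1\, y/(xz)$ where $|x||z|^{1/3}\geq R_1$, whose dominant contribution is $-xz\,\partial_y\psi_1 = -\kappa_1$, and $\psi_2 = \tfrac{\kappa_2}{2\gamma_1}\bigl(4R_1^2/|z|^{2/3} - x^2\bigr)$ where $|x||z|^{1/3}\leq R_1$, whose decisive negative contribution is the \emph{second-order} term $\gamma_1\partial_x^2\psi_2 = -\kappa_2$. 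Neither the averaged drift $\E[xy]<0$ for $z>0$ nor the $z<0$ saddle structure enters; the mechanism in the thin neighborhood of the $z$-axis is the pointwise diffusion coming from $\gamma_1>0$, not the trajectory-level kick-off-and-amplify picture you describe.

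You correctly anticipate the central pitfall, namely that cutting off a $z$-growing quantity (such as $M = 2\sigma z - x^2$) in the $(x,y)$-variables resurrects an $O(z^2)$ term through the $-xz\,\partial_y$ coupling acting on the cutoff, but you offer no resolution. The paper avoids the pitfall precisely because $\psi_1$ and $\psi_2$ are $O(|z|^{-2/3})$ on their supports, so every cutoff error carries a decaying power of $|z|$; a correction "built on $M$" lacks that property and, in the region $z\ll 0$, a term that is increasing as $z\to-\infty$ necessarily has $F'(M)<0$ there, flipping the sign of $F'(M)\LL M$ against you. The $z\to+\infty$ averaging picture is further complicated by the fact that the frozen-$z$ stationary variance of $y$ scales like $z$, so the frozen equilibrium does not lie inside the region $\{x^2+y^2\leq R_0\}$ where the correction is required, and the claimed $O(1/z)$ coefficients of the Poisson corrector are not justified. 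In short, the outer logic is sound and the gluing obstacle you flag is real, but the Lyapunov construction as proposed would not close without essentially rediscovering the paper's scaling-based building blocks $y/(xz)$ and $|z|^{-2/3}-x^2$.
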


Regarding the organization of the section, \cref{sec:derivation:ly:1},
\ref{sec:derivation:ly:scl}, \ref{sec:glu} contain the derivation of a
Lyapunov function $V: \RR^3 \to \RR$ leading to
\eqref{eq:uni:bound:comp:xi} and the quantitative estimates implying
\eqref{eq:cru:Lyp:bound}.  The rigorous proof of \cref{prop:pos:rec} is
given in \cref{sec:main:thm:beta:z}.

\subsection{Derivation of the Lyapunov Function}
\label{sec:derivation:ly:1}

In order to simplify our analysis slightly in what follows we begin
with the preliminary observation that it is sufficient to address
special case when $\rho = 0$ in \eqref{eqn:lor63}, namely
\begin{align}
\nonumber dx&= \sigma (y-x) \, dt + \sqrt{2\gamma_1} \, dB_1,\\
          d y &= -xz \, dt  -y \,dt + \sqrt{2\gamma_2} \, dB_2,
\label{eqn:lor}\\
\nonumber dz&= xy \, dt + \sqrt{2\gamma_3} \, dB_3.
\end{align}        
Indeed, in the rest of this section we proceed to construct a function
$V\in C^2(\RR^3;$ $[0, \infty))$ such that for some constants $c,d>0$
and some compact set $\mathcal{K} \subseteq \RR^3$ we have
\begin{align}
\label{eqn:lyapexist}
\M V\leq - c + d\mathbf{1}_{\mathcal{K}}
\end{align}
where $\M$ is the infinitesimal generator of \eqref{eqn:lor}, that is, 
\begin{align}
  \M = \sigma(y-x) \partial_x -(xz+y) \partial_y 
       + xy \partial_z 
       + \gamma_1 \partial_x^2 
       + \gamma_2 \partial_y^2 
       + \gamma_3 \partial_z^2 .
  \label{eq:reduced:Lnz:gen}
\end{align}
Having found such  $V$ we obtain according to \cref{prop:stab},(b)
that
\begin{align}
  \E_X \tilde{\xi}_{\mathcal{K}} \leq \frac{V(X)}{c}, \quad
  \text{ where } 
  \tilde{\xi}_{\mathcal{K}} := 
  \inf\{ t\geq 0 \, : \tilde{X}_t \in \mathcal{K}\},
\end{align}
where $\tilde{X}_t = (\tilde{x}_t, \tilde{y}_t,\tilde{z}_t)$ obeys
\eqref{eqn:lor}.  Clearly
$X_t = (\tilde{x}_t, \tilde{y}_t, \tilde{z}_t - \rho)$ satisfies
\eqref{eqn:lor63} in the general case of any $\rho \in \RR$.  Thus,
if for any $R > \rho$ we denote
\begin{align}
  \xi_{R} := \inf\{ t \geq 0 : |X_t | \leq R\},
  \quad
  \tilde{\xi}_{R}  
     := \inf\{ t \geq 0 : | \tilde{X}_t | \leq R - \rho \},
\end{align}
then we have $\xi_{R} \leq \tilde{\xi}_{R}$.  Thus, by choosing $R > 0$
sufficiently large so that $\mathcal{K} \subset B_{R - \rho}$ we
obtain that
$\xi_{R} \leq \tilde{\xi}_{R} \leq \tilde{\xi}_{\mathcal{K}}$, so that
\begin{align}
  \E_X \xi_{R} \leq  \E_X \tilde{\xi}_{R} \leq \frac{V(X)}{c}
  \label{eq:wlog:smp:lor:concl}
\end{align}
allowing us to conclude \eqref{eq:uni:bound:comp:xi} as desired
in  \cref{prop:pos:rec}.

In order to find $V$ satisfying \eqref{eqn:lyapexist}, we first use
the natural Lyapunov function for \eqref{eqn:lor} when $\beta > 0$.
Indeed, defining
 \begin{align}
   \tilde{H}(x,y,z) = x^2 + y^2 + z^2 -2\sigma z + \kappa_0
 \end{align}
 where $\kappa_0 >0$ is large enough so that $\tilde{H} \geq 0$.
 Observe that $\tilde{H}$ provides a good initial guess for $V$ since
 \begin{align}
   \label{eqn:MH}
   \M (\tilde{H})
       = -2 \sigma x^2 -2y^2  
         + 2(\gamma_1 + \gamma_2 + \gamma_3)\,,
 \end{align}
 and therefore we have the desired
 inequality~\eqref{eqn:lyapexist} on the set where
 $|(x,y)|:=\sqrt{x^2+ y^2}$ is large.  More specifically, for the
 region
\begin{align}
  \mathcal{R}_0= \{ x^2 + y^2 \geq R_0\}
  \label{eq:region:R0}
\end{align}
with a sufficiently large  $R_0 \geq 1$ depending
only on $\gamma_1 + \gamma_2 + \gamma_3 > 0$, we have 
\begin{align}
  \M (\tilde{H}) \leq -(\gamma_1 + \gamma_2 + \gamma_3)
  \quad \text{ in } \mathcal{R}_0.
  \label{eq:good:Lyp:bnd:1}
\end{align}
However, \eqref{eq:good:Lyp:bnd:1} does not imply the 
bound \eqref{eqn:lyapexist} if $|(x,y)|$ is small (and $|z|$ is large).

To fix this issue, we seek for a lower-order perturbation $\psi$ of
$\tilde{H}$ encapsulating the `averaging' effects of the
dynamics. More specifically, we start with $\tilde{H}$ and find a
function $\psi\in C^2(\RR^3; \RR)$ satisfying
\begin{align}
  \limsup_{|X| \to \infty} \frac{\psi(X)}{\tilde{H}(X)} =  0 
  \label{eq:pos:Lyp}
\end{align}
so that $V:= \tilde{H} + \psi$ satisfies $V \geq 0$ and \eqref{eqn:lyapexist} for
some $c,d>0$ and the compact set 
\begin{align}
    \mathcal{K} := \{ x^2 + y^2 \leq R_0, |z| \leq R_3\}
  \label{eq:comp:exp:set}
\end{align}
for suitable choices of $R_0, R_3 \geq 1$.  

Note that, with this strategy, because $\tilde{H}$
satisfies~\eqref{eqn:MH} on $\mathcal{R}_0$, we should naturally
set $\psi=0$ on $\mathcal{R}_0$.  On the other hand, when
$x^2+y^2\leq R_0$ and $|z|$ is large, we should seek a nontrivial
perturbation $\psi$ through a scaling analysis to identify dominant
terms in $\M$.

\subsection{Scaling Arguments and Definition of $\psi$}
\label{sec:derivation:ly:scl}

To see how to define $\psi$ on the complement of $\mathcal{R}_0$, it
is helpful to first heuristically analyze the dynamics when $|z|$ is
large and $x$ and $y$ are bounded.  To this end, consider the scaling
transformation
 \begin{align}
 T_\lambda(x,y,z)=(\lambda^{-\alpha} x, y, \lambda z)
 \end{align}
 where $\lambda >1$ is large and $\alpha \in [0, 1]$.  We apply
 $T_\lambda$ to the generator $\M$ to formally see how the dynamics
 behaves as $z$ gets large.  Observe that
 \begin{align}
 \nonumber  
    T_\lambda \circ \M
    &=  \sigma( y \lambda^{\alpha} - x) \partial_x 
        -( \lambda^{1-\alpha} xz+ y) \partial_y 
        + \lambda^{-1-\alpha} xy \partial_z  
        +\gamma_1 \lambda^{2\alpha} \partial_x^2
        +\gamma_2 \partial_y^2  
        +\lambda^{-2}\gamma_3 \partial_z^2\\
 \label{eqn:scaling} 
    &\sim \lambda^{1-\alpha} x z \partial_y 
      + \gamma_1 \lambda^{2\alpha} \partial_x^2,
 \end{align}   
 whenever $\lambda \gg 1$ and $\alpha > 0$. 

 Observe that there are two regimes depending on $\alpha$.  If
  $\alpha \in [0, 1/3)$, the most
 significant term in \eqref{eqn:scaling} is
 $\lambda^{1-\alpha} xz \partial_y$.  Hence, the
 dominant dynamics of~\eqref{eqn:lor} is given by
\begin{align}
\label{eqn:lorapprox1}
\dot{X}=0, \qquad \dot{Y}= -XZ, \qquad \dot{Z}=0  
\end{align}  
and we expect such an approximation to be valid
in the region
\begin{align}
  \mathcal{R}_1:=\{x^2+ y^2 \leq R_0, 
      \,\, |x| |z|^{1/3} \geq R_1, |z| \geq R_3\}
  \label{eq:region:R1}
\end{align}
where $R_0, R_1, R_3 \geq 1$ are large constants to be determined
below. This suggests that we search for a function $\psi=\psi_1$ such
that the infinitesimal generator of \eqref{eq:region:R1} applied to
$\psi_1$ is negative
\begin{align}
  -xz \partial_y \psi_1 = - \kappa_1
\end{align}  
where $\kappa_1 >2(\gamma_1+ \gamma_2 + \gamma_3)$ is a constant.
Note that this equation gives the following particular solution
\begin{align}
\label{eqn:psi1}
\psi_1= \kappa_1 \frac{y}{xz}.
\end{align}
In addition,
 on the set $\mathcal{R}_1$ and positivity condition
\eqref{eq:pos:Lyp} holds and ($\M$ is the generator of \eqref{eqn:lor})
\begin{align} \label{mpb}
\frac{\M(\psi_1)}{\kappa_1}
  &= -\frac{\sigma(y - x)y}{x^2z} - 1 - \frac{y}{zx} - \frac{y^2}{z^2}
    + 2\gamma_1 \frac{y}{x^3z} + 2\gamma_3 \frac{y}{xz^3}\\
&\leq -1 + C\frac{R_0^3}{R_1}
\end{align}
where we used that on $\mathcal{R}_1$ one has
$|z|^{\frac{1}{3}} \geq \frac{R_1}{R_0}$ and $R_i \geq 1$ for each
$i = 1, 2, 3$. Note that the constant $C = C(\sigma, \gamma_3) > 0$ is
independent of $R_0,R_1,R_2$, and $\kappa_1$.  Thus, for sufficiently
large $R_1$ depending on $R_0$, we obtain
\begin{align}
\M(\psi_1) 
  &\leq - \frac{1}{2}\kappa_1 
    \quad \text{ in } \mathcal{R}_1,
    \label{eq:gd:bnd:psi1:R1}
\end{align}
Consequently,  for any fixed 
$R_0 \geq 1$, 
we can choose suitably large
$\kappa_1 \geq 1 \vee (4 (\gamma_1 + \gamma_2 + \gamma_3)$ and
$R_1 \geq 1$ so that
\begin{align}
  \mathcal{M}(\tilde{H} + \psi_1) \leq - \frac{\kappa_1}{2}
  \quad \text{ on the region } \mathcal{R}_1.
  \label{eq:good:Lyp:bnd:2}
\end{align}

Next, assume $\alpha \in (1/3,\infty)$ and observe that the dominant
term in \eqref{eqn:scaling} is
$\gamma_1 \lambda^{2\alpha} \partial_x^2$.  See
\cref{rmk:bnd:val:case} below which discusses the boundary case
$\alpha = 1/3$, where the two terms in \eqref{eqn:scaling} balance.
Therefore, the main contribution of the dynamics of \eqref{eqn:lor63}
in the region
\begin{align}
  \mathcal{R}_2= \{x^2+y^2 \leq R_2,  |x||z|^{1/3} \leq R_1, |z| \geq R_3 \},
 \label{eq:region:R2}
\end{align}
is given by the
SDE
\begin{align}
\label{eqn:approx2}
  dX = \sqrt{2\gamma_1} \, dB_1,  \quad
  \dot{Y} = 0,  \quad
  \dot{Z} = 0.  
\end{align} 
In the definition of $\mathcal{R}_2$, the constants $R_2$ and $R_3$
are considered sufficiently large, possibly depending on $R_0$.
\footnote{Note that additional parameter $R_2$ can be simply taken as
  $R_0$ in our preliminary analysis. However, it will play an
  important role later when we need to `glue' our Lypunov function $V$
  together to obtain a $C^2$ function.}  Thus, as above, in
$\mathcal{R}_2$, we should look for $\psi=\psi_2$ for which the
infinitesimal generator of \eqref{eqn:approx2} is negative, that is,
for $\psi_2$ that solves
\begin{align}
\gamma_1 \partial_x^2 \psi_2=-\kappa_2
\end{align}
where again $\kappa_2 \geq 1 \vee (4 (\gamma_1 + \gamma_2 + \gamma_3)$ is
a large free parameter we can adjust as suits our needs further on.
Note that a particular solution of this partial differential equation
is
\begin{align}
\label{eqn:psi2}
\psi_2= \frac{\kappa_2}{2\gamma_1}\bigg(\frac{4R_1^2}{|z|^{2/3}} - x^2\bigg).
\end{align} 
The solution is chosen such that it 
satisfies
\begin{align}\label{sep}
  |\psi_2| \leq C \frac{\kappa_2 R_1^2}{|z|^{2/3}}
  \qquad \textrm{whenever } |x| |z|^{1/3} \leq 2R_1.
\end{align}

As for the previous case, one can easily check \eqref{eq:pos:Lyp},
that is, $\psi_2$ is dominated by $\tilde{H}$ for large values of
$(x,y,z) \in \mathcal{R}_2$.  Moreover $\psi_2$ satisfies, for
$z\neq 0$,
\begin{align}
  \M (\psi_2)
  =-\kappa_2  - \frac{\kappa_2 \sigma}{\gamma_1} x(y-x) 
    - \frac{R_1^2 \kappa_2}{3\gamma_1} \frac{xy}{|z|^{2/3}} 
    + \frac{10 \kappa_2 R_1^2}{9}\frac{\gamma_3}{\gamma_1} \frac{1}{|z|^{8/3}}. 
\end{align}  
Also, in $\mathcal{R}_2$, using that $|x| \leq R_1/|z|^{1/3}$ and $R_i \geq 1$ for $i = 1, 2, 3$,
we have
\begin{align}
 \frac{\kappa_2 \sigma}{\gamma_1} |x(y-x)|
  \leq  \frac{2\kappa_2 \sigma}{\gamma_1} \frac{R_1 R_2^{1/2}}{|z|^{1/3}}
  \leq  \frac{2\kappa_2 \sigma}{\gamma_1} \frac{R_1^2 R_2}{R_3^{1/3}}.
\end{align}
Since the other terms (except $-\kappa_2$) have $|z|$ to some power in
the denominator, they are straightforward to estimate.  Overall, it
follows
\begin{align}
\M (\psi_2)
  &\leq -\kappa_2\left(1 - \frac{ C R_1^2 R_2}{R_3^{1/3}}\right),
    \label{eq:gd:bnd:psi2} 
\end{align}  
where the constant $C = C(\sigma, \gamma_1, \gamma_3)$ is independent of
$R_1,R_3, R_3$, and $\kappa_2$. Hence,  given
$R_2, R_1 \geq 1$, we choose large $R_3 \geq 1$ and 
$\kappa_2 \geq 1 \vee (4 (\gamma_1 + \gamma_2 + \gamma_3)$ so that
\begin{align}
  \M (\tilde{H} + \psi_2)  \leq - \frac{\kappa_2}{2}
  \quad \text{ in } \mathcal{R}_2
  \label{eq:good:Lyp:bnd:3}
\end{align}

Let us now make the preliminary definition
\begin{align}
  V := \tilde{H} + \indFn{\mathcal{R}_1} \psi_1 + \indFn{\mathcal{R}_2} \psi_2
  \label{eq:v:prelim}
\end{align}
and notice that the complement of compact region
$\mathcal{K} = \{ x^2 + y^2 \leq R_0, z \leq R_1\}$, as in
\eqref{eq:comp:exp:set} satisfies
\begin{align}
  \mathcal{K}^C \subseteq \mathcal{R}_0 \cup \mathcal{R}_1 \cup \mathcal{R}_2
  \label{eq:Komp:decomp}
\end{align}
Thus, setting aside the issue of 
differentiability of $V$, we can choose values
for $R_0, R_1, R_2, R_3 \geq 1$ and values for
$\kappa_1,\kappa_2 \geq 4(\gamma_1 + \gamma_2 + \gamma_3)$ such that
a combination of  \eqref{eq:good:Lyp:bnd:1}, \eqref{eq:good:Lyp:bnd:2} and
\eqref{eq:good:Lyp:bnd:3} leads to \eqref{eqn:lyapexist}.

The next section addresses the smoothness issue for $V$ defined as
\eqref{eq:v:prelim} by replacing indicator functions with smooth
cut-off functions.  We also provide estimates for additional terms
which are produced when the operator $\M$ acts on these smooth
cut-offs.

\begin{Remark}
  \label{rmk:bnd:val:case}
  One may be concerned that, when defining $\psi_1$ and $\psi_2$
  we neglected the effective dynamics of
  \eqref{eqn:lor63} in the critical region $\alpha=1/3$.  This
  is not a problem because the function $\psi_2$ is
  independent of $y$, and therefore it solves the associated PDE
  with both dominant terms
  \begin{align}
    -xz \partial_y \psi_2 
    + \gamma_1 \partial_x^2 \psi_2 
          = -\kappa_2.
  \end{align} 
\end{Remark}

\subsection{Gluing}
\label{sec:glu}

In order to replace the indicator functions in \eqref{eq:v:prelim}
with smooth cut-off functions we adopt the following
definitions. Consider $\chi$ and $\tilde{\chi}$ to be non-negative
$C^\infty(\RR)$ functions such that
\begin{align}
\chi(x)= 
\begin{cases}
1 & \text{ if } |x| \leq 1\\
0 & \text{ if } |x| \geq 2
\end{cases}
\quad 
\text{ and } 
\quad
\tilde{\chi}(x)= 
\begin{cases}
1 & \text{ if } |x| \geq 1\\
0 & \text{ if } |x| \leq 1/2.
\end{cases}. 
\end{align}
We now define\footnote{Observe that 
  for example $\tilde{\theta}_2^3(z)$ indicates that we are cutting off the
  region in $z$ (argument of the function) below the parameter value $R_3$ (tilde and superscript).}
\begin{align}
 \theta_1(x,y,z) := 
     \chi\bigg(\frac{x^2+y^2}{R_0}\bigg) 
     \tilde{\chi}\bigg(\frac{|x| |z|^{1/3}}{R_1 }\bigg) 
     \tilde{\chi}\bigg(\frac{|z|}{R_3}\bigg) 
     := \theta_1^0(x,y)\tilde{\theta}_1^1(x,z)\tilde{\theta}^3(z)
  \label{eq:Lyp:cut:1}
\end{align}
and put
\begin{align}
 \theta_2(x,y,z) := 
     \chi\bigg(\frac{x^2+y^2}{R_2}\bigg) 
     \chi\bigg(\frac{|x| |z|^{1/3}}{R_1 }\bigg)
     \tilde{\chi}\bigg(\frac{|z|}{R_3}\bigg) 
     := \theta_2^2(x,y)\theta_2^1(x,z)\tilde{\theta}^3(z).
  \label{eq:Lyp:cut:2}
\end{align}
We now define
\begin{align}
  V &:= \tilde{H} + \theta_1 \psi_1 + \theta_2 \psi_2
  \notag\\
    &= x^2 + y^2 + z^2 - 2 \sigma z + \kappa_0 + 
     \kappa_1 \theta_1(x,y,z) \frac{ y}{xz} + 
     \kappa_2 \theta_2(x,y,z) 
  \frac{1}{2 \gamma_1}\left( \frac{R_1^2}{|z|^{2/3}} - x^2\right).
  \label{eq:V:def}
\end{align}
Of course this definition requires the specification of the parameters
$R_0, R_1, R_2, R_3 \geq 1$ and $\kappa_0, \kappa_1, \kappa_2 > 0$, which
 will be clarified as we proceed with the argument. 

\subsection{Rigorous Bounds on $V$} 
\label{sec:main:thm:beta:z}
We are now ready to use $V$ defined in \eqref{eq:V:def} to prove the main result of this section.

\begin{proof}[Proof of \cref{prop:pos:rec}]
  As we identified in the argumentation leading to
  \eqref{eq:wlog:smp:lor:concl} above, it is sufficient to show that
  the $V$ defined by \eqref{eq:V:def} satisfies \eqref{eqn:lyapexist}
  and is strictly positive for suitable values of $R_0, R_1, R_2, R_3$
  and $\kappa_0, \kappa_1, \kappa_2$.  We emphasize that for the
  remainder of the proof, any constant $C > 0$ is
  independent of the values of the parameters $R_0, R_1, R_2, R_3$
  and $\kappa_0, \kappa_1, \kappa_2$ unless explicitly stated
  otherwise.

  Regarding the condition \eqref{eqn:lyapexist} we begin by observing
  that
\begin{align}
  \M(V) 
  =& \, \M(\tilde{H}) + \theta_1 \M(\psi_1) + \theta_2\M(\psi_2) 
  \notag\\
    &+\psi_1 \M (\theta_1)  +   2 \nabla_\gamma \theta_1 \cdot \nabla_\gamma \psi_1 +
    \psi_2\M(\theta_2) + 2 \nabla_\gamma \theta_2 \cdot \nabla_\gamma \psi_2,
      \label{eq:M:V:real:breakdown}
\end{align}
where we adopt the shorthand notation
$\nabla_\gamma = (\gamma_1 \partial_x, \gamma_2 \partial_y,
\gamma_3 \partial_z)$.
We proceed to expand each of the terms in
\eqref{eq:M:V:real:breakdown}, where derivatives fall on the cut-off
functions $\theta_1$ and $\theta_2$.  For later use we note
the estimate
\begin{gather}
  |\partial_s^i \theta_1^0| \leq C \indFn{x^2 + y^2 \leq 2R_0},
\end{gather}
where $s$ stands for $x$ or $y$ and $i \in \{1, 2\}$. Indeed, for example
\begin{equation}\label{gso}
  |\partial_y^2 \theta_1^0| \leq C \left( \frac{1}{R_0}
    + \frac{|y|^2}{R_0^2} \right) \indFn{x^2 + y^2 \leq 2R_0} \leq   C \indFn{x^2 + y^2 \leq 2R_0} 
\end{equation}
and other estimates follow analogously. In addition, we have
\begin{gather}
  |\partial_x   \tilde{\theta}_1^1| \leq C \frac{|z|^{1/3}}{R_1},
  \quad
  |\partial_x^2   \tilde{\theta}_1^1| \leq C \frac{|z|^{2/3}}{R_1^2},
  \quad
  |\partial_z   \tilde{\theta}_1^1|\leq C \frac{|x|}{|z|^{2/3} R_1},
  \quad
  |\partial_z^2   \tilde{\theta}_1^1| \leq 
  C \left( \frac{|x|}{|z|^{5/3} R_1}+ \frac{|x|^2}{|z|^{4/3} R_1^2}
    \right) ,
  \notag    \\
  |\partial_z  \tilde{\theta}^3|
         \leq C \frac{1}{R_3} \indFn{|z| \geq R_3/2},
  \quad
  |\partial_z^2  \tilde{\theta}^3| \leq C \frac{1}{R_3^2} \indFn{|z| \geq R_3/2}
  \label{eq:cut:est:th1}
\end{gather}
for a constant $C$ depending only and on the specifics of the cut-offs
$\chi$ and $\tilde{\chi}$ independent of $R_0, R_1$, and $R_3$.
Similarly
\begin{gather}
  |\partial_s^i \theta_2^2| \leq  C \indFn{x^2 + y^2 \leq 2R_2}  ,
\end{gather}
where $s$ stands for $x$ or $y$ and $i \in \{1, 2\}$ and
\begin{gather}
  |\partial_x   \theta_2^1| \leq C \frac{|z|^{1/3}}{R_1},
  \quad
  |\partial_x^2   \theta_2^1| \leq C \frac{|z|^{2/3}}{R_1^2},
  \quad
  |\partial_z   \theta_2^1|\leq C \frac{|x|}{|z|^{2/3} R_1},
  \quad
  |\partial_z^2   \theta_2^1| \leq 
  C \left( \frac{|x|}{|z|^{5/3} R_1}+ \frac{|x|^2}{|z|^{4/3} R_1^2}
    \right) ,
  \label{eq:cut:est:th2}
\end{gather}
where again $C >0$ is independent of $R_1, R_2$, and $R_3$.  Observe
that $\tilde{\theta}^3$ is the same in both $\theta_1$ and $\theta_2$.
Denote $K_{R_3}$ a constant that might depend on $R_0$, $R_1$, and
$R_2$ such that
\begin{align}
\label{eqn:constks}
\lim_{R_3 \to \infty} K_{R_3} = 0 \,.
\end{align}

We expand $\psi_1 \M (\theta_1)$ as 
\begin{align}
  \psi_1 \M (\theta_1) =& \, 
  \sigma(y-x) \psi_1 
  (\partial_x\theta_1^0\tilde{\theta}_1^1\tilde{\theta}^3 +  
   \theta_1^0 \partial_x\tilde{\theta}_1^1\tilde{\theta}^3)
  -(xz+y)\psi_1  
      \partial_y \theta_1^0\tilde{\theta}_1^1\tilde{\theta}^3
   + xy \psi_1 
  ( \theta_1^0\partial_z\tilde{\theta}_1^1\tilde{\theta}^3
    + \theta_1^0\tilde{\theta}_1^1 \partial_z\tilde{\theta}^3)
                          \notag\\
  &+ \gamma_1 \psi_1 
      (\partial_x^2 \theta_1^0\tilde{\theta}_1^1\tilde{\theta}^3
       +  \theta_1^0 \partial_x^2\tilde{\theta}_1^1\tilde{\theta}^3
       + 2\partial_x  \theta_1^0 \partial_x\tilde{\theta}_1^1\tilde{\theta}^3)
  + \gamma_2 \psi_1 \partial_y^2 \theta_1^0\tilde{\theta}_1^1\tilde{\theta}^3
                        \notag\\
  &+ \gamma_3 \psi_1 
    (\theta_1^0\partial_z^2\tilde{\theta}_1^1\tilde{\theta}^3
    +\theta_1^0 \tilde{\theta}_1^1 \partial_z^2\tilde{\theta}^3
    +2\theta_1^0 \partial_z \tilde{\theta}_1^1 \partial_z \tilde{\theta}^3) \,.
    \label{eq:cutoff:vomit:1}
\end{align}
Since on $\mathcal{R}_1$  one has 
\begin{equation}\label{epo}
|\psi_1| = \kappa_1 \left|\frac{y}{xz}\right| \leq \frac{\kappa_1 R_0^{\frac{1}{2}}}{R_1} \frac{1}{|z|^{\frac{2}{3}}}
\end{equation}
and $x, y$ are bounded, it is easy to check that all terms except
$xz \psi_1 \partial_y \theta_1^0\tilde{\theta}_1^1\tilde{\theta}_1^3$
and
$\gamma_1 \psi_1 \theta_1^0
\partial_x^2\tilde{\theta}_1^1\tilde{\theta}_1^3$ can be bounded by
$\kappa_1 K_{R_3}$ (some power of $z$ is left in the denominator).
Referring back to \eqref{gso}, \eqref{eq:cut:est:th1}, and \eqref{epo}
we have
\begin{align}
  | xz \psi_1 \partial_y \theta_1^0\tilde{\theta}_1^1\tilde{\theta}^3|
  &= 
 \kappa_1 |y \partial_y \theta_1^0\tilde{\theta}_1^1\tilde{\theta}^3|
    \leq C \kappa_1 \indFn{R_0 \leq x^2 + y^2 \leq 2R_0}
    \leq  C \kappa_1 \indFn{\mathcal{R}_0}\\
  |\gamma_1 \psi_1 \theta_1^0 \partial_x^2\tilde{\theta}_1^1\tilde{\theta}^3|
  &\leq C  \frac{\kappa_1 R_0^{\frac{1}{2}}}{R_1} \frac{1}{|z|^{\frac{2}{3}}}  \frac{|z|^{\frac{2}{3}}}{R_0^2}
  \leq C \kappa_1 \frac{1}{R_1} \,,
  \label{eq:bad:ass:tm:1}
\end{align}
where the constant $C$ is independent of $R_0, R_1, R_2, R_3$ and
$\kappa_0, \kappa_1, \kappa_2$.  Overall, we have
\begin{equation}\label{opm}
  |\psi_1 \M (\theta_1)|
  \leq C\kappa_1 \left(\indFn{\mathcal{R}_0} + \frac{1}{R_1} + K_{R_3} \right) \,
\end{equation}
where we recall that $K_{R_3}$ is as in~\eqref{eqn:constks}.

Next, we estimate
\begin{align}         
  |\nabla_\gamma& \theta_1 \cdot \nabla_\gamma \psi_1| \notag\\
    &\leq C \kappa_1
      \left(
        \frac{|y|}{|x|^2|z|} 
          |\partial_x\theta_1^0\tilde{\theta}_1^1\tilde{\theta}^3 +  
              \theta_1^0 \partial_x\tilde{\theta}_1^1\tilde{\theta}^3|
          + 
        \frac{1}{|x||z|} 
          |\partial_y \theta_1^0\tilde{\theta}_1^1\tilde{\theta}^3|
          +
        \frac{|y|}{|x||z|^2} 
         | \theta_1^0\partial_z\tilde{\theta}_1^1\tilde{\theta}^3
         + \theta_1^0\tilde{\theta}_1^1 \partial_z\tilde{\theta}^3 | 
      \right)
          \notag\\
    &\leq C \kappa_1
      \left(
        \frac{R_0^{\frac{1}{2}}}{R_1^2 |z|^{\frac{1}{3}}} 
          |\partial_x\theta_1^0\tilde{\theta}_1^1\tilde{\theta}^3 +  
              \theta_1^0 \partial_x\tilde{\theta}_1^1\tilde{\theta}^3|
          + 
        \frac{1}{R_1 |z|^{\frac{2}{3}}} 
          |\partial_y \theta_1^0\tilde{\theta}_1^1\tilde{\theta}^3|
          +
        \frac{R_0^{\frac{1}{2}}}{R_1 |z|^{\frac{5}{3}}} 
         | \theta_1^0\partial_z\tilde{\theta}_1^1\tilde{\theta}^3
         + \theta_1^0\tilde{\theta}_1^1 \partial_z\tilde{\theta}^3 | 
      \right)
         \notag\\      
   &\leq C \kappa_1
      \left(  \frac{R_0^{\frac{1}{2}}}{R_1^4} +  
      K_{R_3}         
      \right) \,.\label{eq:grad:gam:tm:1}
\end{align}

We next estimate the cut-off terms involving $\psi_2$.  Similar to
\eqref{eq:cutoff:vomit:1}, we can write $\psi_2 \M (\theta_2)$ as
\begin{align}
  \psi_2 \M (\theta_2) =& \, 
  \sigma(y-x) \psi_2 
  (\partial_x\theta_2^2\theta_2^1\tilde{\theta}^3 +  
   \theta_2^2\partial_x\theta_2^1\tilde{\theta}^3)
  -(xz+y)\psi_2  
      \partial_y\theta_2^2\theta_2^1\tilde{\theta}^3
   + xy \psi_1 
  ( \theta_2^2 \partial_z \theta_2^1\tilde{\theta}^3 +  
    \theta_2^2\theta_2^1 \partial_z \tilde{\theta}^3 )
                         \\
  &+ \gamma_1 \psi_2 
      (\partial_x^2 \theta_2^2\theta_2^1\tilde{\theta}^3+
    \theta_2^2 \partial_x^2 \theta_2^1\tilde{\theta}^3 +
    2 \partial_x \theta_2^2 \partial_x \theta_2^1\tilde{\theta}^3)
  + \gamma_2 \psi_2 
    \partial_y^2 \theta_2^2\theta_2^1\tilde{\theta}^3+
                    \\
  &+ \gamma_3 \psi_2 
    (\theta_2^2\partial_z^2\theta_2^1\tilde{\theta}^3+
    \theta_2^2\theta_2^1\partial_z^2\tilde{\theta}^3+
    2\theta_2\partial_z\theta_2^1\partial_z\tilde{\theta}^3). \label{eq:wev}
\end{align}
Due to the presence of $\theta_2^1$ and/or its derivatives, each term
in \eqref{eq:wev} is supported on the set
$\{|x||z|^{\frac{1}{3}} \leq 2 R_1\}$, and therefore the estimate
\eqref{sep} applies. Similar to the above, the only terms that cannot
be estimated by $K_{R_3}$ are
$xz\psi_2 \partial_y\theta_2^2\theta_2^1\tilde{\theta}^3 $ and
$\gamma_1 \psi_2 \theta_2^2 \partial_x^2 \theta_2^1\tilde{\theta}^3$,
and for those we have
\begin{align}
  |xz\psi_2 \partial_y\theta_2^2\theta_2^1\tilde{\theta}^3| 
  \leq C \kappa_2 \frac{|x||z|R_1^2|y|}{|z|^{2/3}R_2}
  \leq C \kappa_2 \frac{R_1^3}{R_2^{1/2}}
   \label{eq:bad:ass:tm:2:2}
\end{align}
and, by definition of $\theta_1$,
\begin{align}
  | \gamma_1 \psi_2 \theta_2^2 \partial_x^2 \theta_2^1\tilde{\theta}^3|
  \leq& C \kappa_2 \tilde{\theta}^3 
      \indFn{ x^2 + y^2 \leq 2R_2, R_1 \leq |x||z|^{1/3} \leq 2R_1}
  \notag\\
  \leq& C \kappa_2 \tilde{\theta}^3 
      \indFn{ x^2 + y^2 \leq R_0, R_1 \leq |x||z|^{1/3} \leq 2R_1}+
  C \kappa_2 \indFn{ \mathcal{R}_0}
\notag\\
  \leq& C \kappa_2 \theta_1 +
  C \kappa_2 \indFn{ \mathcal{R}_0}  
  \,.
  \label{eq:bad:ass:tm:2:3}
\end{align}
Hence, we have 
\begin{align}
  |\psi_2 \M(\theta_2)| \leq C\kappa_2 \left( \frac{R_1^3}{R_2^{1/2}} +  \theta_1 +
  \indFn{ \mathcal{R}_0} + K_{R_3}\right) 
  \label{eq:good:tm:N:2}
\end{align}
where $K_{R_3}$ is as in~\eqref{eqn:constks}.

After expanding $\nabla_\gamma \theta_2 \cdot \nabla_\gamma \psi_2$,
the only terms that cannot be bounded by $K_{R_3}$ are
$\gamma_1 \partial_x \psi_2 \partial_x
\theta_2^2\theta_2^1\tilde{\theta}_2^3$ and
$\gamma_1 \partial_x \psi_2
\theta_2^2\partial_x\theta_2^1\tilde{\theta}_2^3$.  However, if
$R_2 \geq R_0$
\begin{align}
|\gamma_1 \partial_x \psi_2
\partial_x \theta_2^2\theta_2^1\tilde{\theta}^3| 
 \leq C \kappa_2 \frac{|xy|}{R_2}  
      \indFn{ x^2 + y^2 \leq 2R_2}
  \leq C \kappa_2  \indFn{R_2 \leq x^2 + y^2 \leq 2R_2} \leq C \kappa_2 \indFn{ \mathcal{R}_0} 
\end{align}
and on $\mathcal{R}_2$
\begin{align}
|\gamma_1 \partial_x \psi_2
 \theta_2^2\partial_x\theta_2^1\tilde{\theta}^3| 
  \leq& C \kappa_2  \frac{|xy||z|^{\frac{1}{3}}}{R_2R_1}
        \indFn{ x^2 + y^2 \leq 2R_2, |x||z|^{\frac{1}{3}} \leq 2R_1}
        \leq 
C \frac{\kappa_2}{R_2^{\frac{1}{2}}}  \,.
\end{align}
Overall,
\begin{align}
  |\nabla_\gamma \theta_2 \cdot \nabla_\gamma \psi_2|
  &\leq C\kappa_2\left(   \indFn{\mathcal{R}_0} + \frac{1}{R_2^{\frac{1}{2}}}
    + K_{R_3}\right) \,.
         \label{eq:grad:gam:tm:2}
\end{align}
Let us now gather the estimates \eqref{eqn:MH}, \eqref{mpb},
(\ref{eq:gd:bnd:psi2}), \eqref{opm}, \eqref{eq:grad:gam:tm:1},
\eqref{eq:good:tm:N:2}, and \eqref{eq:grad:gam:tm:2} to obtain for
$R_2 \geq R_0$
\begin{align}
  \M(V) \leq & 
      -2 \sigma x^2 -2y^2  + \bar{\gamma}
      -\kappa_1 \theta_1 \left(1 -   \frac{CR_0^{3}}{R_1} \right)
      -\kappa_2 \theta_2 \left(1 - \frac{ C R_1^2 R_2}{R_3^{1/3}}\right)
  \notag\\
     &+ C (\kappa_1 + \kappa_2) \indFn{\mathcal{R}_0} + C\kappa_1\left(\frac{1}{R_1} +      
     \frac{R_0^{\frac{1}{2}}}{R_1^4} \right) + C\kappa_2 \frac{R_1^3}{R_2^{1/2}}  
      + C \kappa_2 \theta_1 
 + K_{R_3}(\kappa_1 + \kappa_2)    \,,        
\end{align}
where $\bar{\gamma} := 2(\gamma_1+ \gamma_2 + \gamma_3)$.  Let us fix
$\kappa_2 = 16 \bar{\gamma}$, $\kappa_1$ such that
$\frac{\kappa_1}{2} \geq \max\{4\bar{\gamma}, C \kappa_2 \}$ and
$R_0 \geq 1$ such that
\begin{align}
  (2\sigma x^2 + y^2) \geq 4 \bar{\gamma} + C(\kappa_1 + \kappa_2)
  \qquad \textrm{ in } \mathcal{R}_0 \,.
\end{align}
Then, choose $R_1$ such that 
\begin{equation}
C\kappa_1\left(\frac{1}{R_1} +      
  \frac{R_0^{\frac{1}{2}}}{R_1^4} \right)
\leq \frac{\bar{\gamma}}{3}
\qquad \textrm{and }  \quad \frac{CR_0^{3}}{R_1}  \leq \frac{1}{2} 
\end{equation}
and $R_2 \geq R_0$ such that 
\begin{equation}
 C\kappa_2 \frac{R_1^3}{R_2^{1/2}} \leq \frac{\bar{\gamma}}{3} \,.
\end{equation}
Finally, choose $R_3$ such that 
\begin{equation}
K_{R_3}(\kappa_1 + \kappa_2) \leq \frac{\bar{\gamma}}{3} \qquad \textrm{ and }\quad 
\frac{ C R_1^2 R_2}{R_3^{1/3}} \leq \frac{1}{4} \,.
\end{equation}

With these parameter selections and referring back to
\eqref{eq:Lyp:cut:1}, \eqref{eq:Lyp:cut:2} we therefore have
\begin{align}
  \M(V) &\leq -4 \bar{\gamma} \indFn{\mathcal{R}_0} + 2\bar{\gamma} 
  - \frac{\kappa_1}{2} \indFn{\mathcal{R}_1} - \frac{\kappa_2}{4}\indFn{\mathcal{R}_2}
              \leq -2\bar{\gamma}   + 
     4\bar{\gamma} (1 - \indFn{\mathcal{R}_0 \cup \mathcal{R}_1 \cup \mathcal{R}_2})
\end{align}
Since $R_1 \leq R_2$, one has
$\{x^2 + y^2 \leq R_0, |z| \geq R_3\} \subset \mathcal{R}_1 \cup
\mathcal{R}_2$, and therefore
$(1 - \indFn{\mathcal{R}_0 \cup \mathcal{R}_1 \cup \mathcal{R}_2}) =
\indFn{\mathcal{K}}$, where
$\mathcal{K} \subset \{x^2 + y^2 \leq R_0, |z| \leq R_3\} $ is
bounded. Consequently, \eqref{eqn:lyapexist} follows with
$c = 2\bar{\gamma}$ and $d = 4\bar{\gamma}$.

Finally let us address the non-negativity of $V$.  Notice that our
selection of the parameters $R_0$, $R_1$, $R_2$, $R_3$ and of
$\kappa_1,\kappa_2$ was made independent of the value $\kappa_0$ (see \eqref{eq:V:def}).
Notice however that, by \eqref{epo} we have
\begin{align}
  |\theta_1 \psi_1| \leq C \kappa_1 \frac{R_1^{1/2}}{R_1R_3^{2/3}}.  
\end{align}
Similar to~\eqref{sep}
we observe that 
\begin{align}
  |\theta_2 \psi_2| \leq C \kappa_2 \frac{R_1^2}{R_3^{2/3}}.
\end{align}
Thus having fixed $R_0, R_1, R_2,R_3, \kappa_1, \kappa_2$ and referring
back to \eqref{eq:V:def} we have
\begin{align}
  V \geq x^2 + y^2 + z^2  - \sigma^2 - C \kappa_1 \frac{R_1^{1/2}}{R_1R_3^{2/3}}
  C \kappa_2 \frac{R_1^2}{R_3^{2/3}} + \kappa_0
\end{align}
making clear that $\kappa_0$ can be selected so that $V$ is positive for
every $(x,y,z) \in \RR^3$.  The proof is now complete.
\end{proof}

\section{Sensitivity with Respect to Convective Forcing.}
\label{sec:sensitive:noise}

This section addresses some special cases of a very degenerate
stochastic forcing when $\beta = 0$ in \eqref{eqn:lor63}.  First, we
establish Theorem \ref{thm:main} part (ii) by using the
test function $M$ given in \eqref{eq:z:cru:test}.

\subsection{Non-existence under highly degenerate noise} 
Before proceeding to the rigorous proof of Theorem \ref{thm:main},
(ii) we present a formal argument.  Suppose that we had an invariant
probability measure $\mu$ for \eqref{eqn:lor63} with
$\beta = \gamma_1 =0$.  Let us proceed with the unjustified assumption
that
\begin{align}
  \int_{\RR^3} |X|^2 \mu(dX) < \infty.
  \label{eq:fin:mom:formal}
\end{align}
Applying It\^o's formula to the function
$M(x,y,z) := 2\sigma z - x^2$, with the process $(x_t, y_t, z_t)$
initially distributed according to such an invariant measure $\mu$, we
obtain
\begin{align}
  \E_\mu [2\sigma z_t - x^2_t] = \E_\mu [2\sigma z_0 - x^2_0] 
          + \E_\mu \int_0^t [2\sigma xy - 2x\sigma(y- x)] ds \,.
\end{align}
Thus, stationarity and simple algebraic manipulations,
cf. \eqref{eq:gen:M:test}, imply that
\begin{align}
\E_\mu \int_0^t  x^2 \, ds= 0  \,,
\end{align}
so that $x_t \equiv 0$ for every $t \geq 0$.  

Now we address two cases.  First, we suppose that that $\gamma_3 > 0$.
In this situation we apply It\^o's lemma to $z^2$ and use that
$x_t \equiv 0$ to find
$dz^2 = 2 \gamma_3 dt + 2\sqrt{2 \gamma_3}zdB_2$. Integrating and
taking expectations we obtain
\begin{align}
  \E_\mu z_t^2 = \E_\mu z_0^2 + 2 \gamma_3
\end{align}
which contradicts stationarity if $\gamma_3 > 0$.  

Now let us consider the second case when $\gamma_3 = 0$ but
$\gamma_2 > 0$.  In this situation the stationary process
$\tilde{X} := (x_t,y_t)$ started with initial conditions distributed
according to the first two components of the invariant probability
measure $\mu$ which satisfies the first two components of
\eqref{eqn:lor63} maintains
\begin{align}
  d x = \sigma y dt, \quad 
  x_0 = 0,  
  \qquad
  d y =  - y dt + \sqrt{2 \gamma_2} dB_2\,.
  \label{eq:red:eq:gm2}
\end{align}
Here, once again we are using that $x_t \equiv 0$ we  obtain 
\begin{equation}
\sigma y = \frac{dx}{dt} = 0 \,,
\end{equation}
and therefore $y = 0$ a contradiction to $\gamma_2 \neq 0$. 

To make the above arguments rigorous and avoid the assumption 
\eqref{eq:fin:mom:formal}, we use cut-off functions and
carefully pass to a limit.  We now provide the details.  

\begin{proof}[Proof of \cref{thm:main}, (ii)]
Let $h : [0, 2] \to \RR$ be a non-decreasing $C^2$ function such that 
\begin{equation}
h(0) = h''(0) = h'(2) = h''(2) = 0, \qquad h'(0) = 1, \qquad h(2) = 1
\end{equation}
and $\max_{[0, 2]}|h'| \leq 1$. Denote $c^* = \max_{[0, 2]}|h''|$. It
is easy to see that such a function indeed exists. For each $N \geq 1$,
define a $C^2$ function $F_N: \RR \to \RR$ as an odd function with
\begin{equation} \label{fnd}
F_N(x) = \begin{cases}
x & x \in [0, N] \,, \\
h(x - N) + N & x \in [N, N + 2] \,,\\
N + 1 & x \geq N + 2 \,.
\end{cases}
\end{equation}
Note that $F_N' \geq 0$, $\max_{[0, 2]}|F_N'| \leq 1$, and
$\max_{[0, 2]}|F_N''| = c^*$.

To obtain a contradiction, assume that there is an invariant probability
measure $\mu$ of \eqref{eqn:lor63} and let $(x, y, z)$ have law
$\mu$. Since $\mu$ is a probability measure, there exists an
increasing sequence of integers $(N_j)_{j = 1}^\infty$ with
$N_{j + 1} - N_j \geq 2$ such that
\begin{equation}\label{pto}
\lim_{j \to \infty} \Prb(|2\sigma z - x^2| \in [N_j, N_j + 2]) = 0 \,.
\end{equation}
If we apply It\^ o's formula to $F_N(2\sigma z - x^2)$, we obtain
\begin{align}
\E_\mu F_N(2\sigma z_t - x^2_t) =& \E_\mu F_N(2\sigma z_0 - x^2_0)) \\
&+ \E_\mu \int_0^t (F_N'(2\sigma z - x^2) (2\sigma xy - 2x \sigma (y - x))  
                  + F_N''(2\sigma z - x^2) 4\sigma^2 \gamma_3) ds \,.
\end{align}
Simple algebraic manipulations and stationarity yield
\begin{equation} 
 \E_\mu x^2  F_N'(2\sigma z_t - x^2_t)  
     = -2\sigma \gamma_3 \E_\mu F_N''(2\sigma z_t - x^2_t)    \,. 
\label{ide}
\end{equation}
Next, we verify that $F'_{N_{j+1}} \geq F'_{N_{j}}$ for any
$j$. Indeed, for $|\xi| \leq N_j$ one has
$1 = F'_{N_{j}}(\xi) = F'_{N_{j+1}}(\xi)$ and for
$|\xi| \geq N_{j + 2}$ one has
$ F'_{N_{j}}(\xi) = 0 \leq F'_{N_{j+1}}(\xi)$. Finally, since
$N_{j + 1} \geq N_j + 2$, for any $|\xi| \in [N_j, N_j + 2]$, we have
$F'_{N_{j}}(\xi) \leq 1 = F'_{N_{j+1}}(\xi)$.  Thus, $(F_{N_j}')$ is
an non-decreasing sequence of non-negative functions that converge
pointwise to 1 on $\RR$. Therefore, by the monotone convergence
theorem and \eqref{ide}, we have
\begin{equation}\label{eox}
  \E x^2 = \lim_{j \to \infty} \E x^2  F_{N_j}'(2\sigma z - x^2)
  = -2\sigma \gamma_3 \lim_{j \to \infty}   \E F_{N_j}''(2\sigma z - x^2) \,.
\end{equation} 
Finally, from $|F_{N}''| \leq c^*$, $F_N'' = 0$ on the complement of
$[N, N + 2]$, and \eqref{pto} follows
\begin{equation}\label{seo}
  \lim_{j \to \infty}  \E F_{N_j}''(2\sigma z - x^2)
  \leq c^* \Prb(2\sigma z - x^2 \in [N_j, N_j + 2]) = 0 \,.
\end{equation}
Combining \eqref{eox} and \eqref{seo} yields $\E x^2 = 0$. However, if
$\E x^2 = 0$, then, $x = 0$ almost surely and by the third equation of
the Lorenz system, we have $z(t) = z(0) + \sqrt{2\gamma_3} B_3(t)$.
This contradicts invariance.
\end{proof}

\subsection{Uniqueness when the noise component acts only on the
  convection component of the system}

We next turn to the case when $\gamma_1 > 0$ but
$\beta = \gamma_2 = \gamma_3 = 0$.  In this special case 
of \cref{thm:main} (i), we can moreover give an explicit 
form for the invariant probability measure.

\begin{Proposition}\label{lem:mnx}
  Consider \eqref{eqn:lor63} with $\sigma > 0$ and $\rho \in \RR$.  If
  $\gamma_1 > 0$, $\gamma_2 = \gamma_3 = 0$, and $\beta = 0$, then
  \eqref{eqn:lor} has precisely one statistically invariant state
given by the product measure
  \begin{align}
   \mu = \nu_{0, \gamma_1/\sigma} \times \delta_0 \times \delta_\rho  
    \label{eq:stat:sol:degen}
  \end{align}
  where $\delta_a$ is the Dirac measure concentrated at $a$ and $\nu_{m,s}$
  is the 1-d Guassian measure with mean $m$ and variance $s$
\end{Proposition}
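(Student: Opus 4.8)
The plan is to verify first that the product measure $\mu$ in \eqref{eq:stat:sol:degen} is invariant, and then that it is the only invariant probability measure. For invariance, the structural point is that with $\gamma_2 = \gamma_3 = \beta = 0$ the $y$- and $z$-equations in \eqref{eqn:lor63} carry no noise, so the affine plane $\{y = 0,\ z = \rho\}$ is invariant under the flow: if $y_0 = 0$ and $z_0 = \rho$ then $\dot z_t = x_t y_t = 0$ and $\dot y_t = x_t(\rho - z_t) - y_t = 0$, whence $y_t \equiv 0$, $z_t \equiv \rho$, and the $x$-equation collapses to the one-dimensional Ornstein--Uhlenbeck equation $dx_t = -\sigma x_t\,dt + \sqrt{2\gamma_1}\,dB_1$. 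Since $\sigma > 0$ and $\nu_{0,\gamma_1/\sigma}$ is precisely the stationary law of this OU process (its stationary variance equals $\gamma_1/\sigma$), the solution started from $\mu$ has law $\mu$ for all $t \geq 0$, so $\mu \mathcal{P}_t = \mu$.

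For uniqueness, let $\pi$ be an arbitrary invariant probability measure and let $(x_t, y_t, z_t)$ be the associated stationary solution of \eqref{eqn:lor63}. I would consider the ``energy'' $E_t := y_t^2 + (z_t - \rho)^2$. Since $E$ does not depend on $x$ and $\gamma_2 = \gamma_3 = 0$, there is no It\^o correction and no martingale term, and a direct computation gives that $E_t$ is pathwise differentiable with $\dot E_t = 2y_t\big(x_t(\rho - z_t) - y_t\big) + 2(z_t - \rho)x_t y_t = -2 y_t^2 \le 0$, so $t \mapsto E_t$ is almost surely non-increasing. To convert this dissipation into information about $\pi$ without assuming any moment bound, I would reuse the cutoff device from the proof of \cref{thm:main}\,(ii): take $C^2$ functions $G_N : [0,\infty) \to [0,\infty)$ with $G_N(s) = s$ on $[0,N]$, $0 \le G_N' \le 1$, $G_N'$ non-decreasing in $N$ and supported in $[0, N+2]$, and $\sup_N \|G_N''\|_\infty < \infty$. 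Applying It\^o's formula to the bounded function $G_N(E_t)$ (again with no martingale part) and using stationarity gives $\E_\pi\big[G_N'(E)\,y^2\big] = 0$ for every $N$, and letting $N \to \infty$ by monotone convergence forces $\E_\pi y^2 = 0$. Hence $y = 0$ $\pi$-a.s., and by stationarity and path continuity $y_t \equiv 0$ almost surely.

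It then remains to pin down the $z$- and $x$-marginals. With $y_t \equiv 0$, the $z$-equation gives $z_t \equiv z_0$ and the $y$-equation gives $0 = \dot y_t = x_t(\rho - z_0)$ for all $t$; but $y_t \equiv 0$ also reduces the $x$-equation to the nondegenerate OU equation $dx_t = -\sigma x_t\,dt + \sqrt{2\gamma_1}\,dB_1$ with $\gamma_1 > 0$, whose path is a.s.\ not identically zero (for instance its quadratic variation is $2\gamma_1 t > 0$ for $t>0$). Consequently $z_0 = \rho$ $\pi$-a.s., so $\pi = \pi_x \times \delta_0 \times \delta_\rho$ with $\pi_x$ the first marginal of $\pi$. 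Finally, $\pi_x$ must be an invariant probability measure for the one-dimensional OU process above, which is uniquely ergodic with invariant law $\nu_{0,\gamma_1/\sigma}$; hence $\pi_x = \nu_{0,\gamma_1/\sigma}$ and $\pi = \mu$. (When $\rho = 0$ this is the assertion for \eqref{eqn:lor}; the shift $z \mapsto z - \rho$ gives the general case stated.)

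The only delicate step is the one forcing $y \equiv 0$: because $\pi$ is not a priori known to have a finite second moment, the identity $\dot E_t = -2y_t^2$ cannot be integrated directly against $\pi$, and the cutoff functions $G_N$ together with a monotone-convergence argument are needed to extract $\E_\pi y^2 = 0$. Everything else — the invariance of $\mu$, the collapse to the $x$-component once $y$ and $z$ are frozen, and the unique ergodicity of the scalar OU process — is routine.
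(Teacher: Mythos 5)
Your argument is correct and follows essentially the same route as the paper's: the dissipative energy $E = y^2 + (z-\rho)^2$ with $\dot E = -2y^2$, truncated by the $G_N$ cutoffs and combined with stationarity to force $\E_\pi y^2 = 0$, and then $y \equiv 0 \Rightarrow z \equiv z_0$ and $x$ reduces to the nondegenerate scalar OU. Two points make your write-up somewhat more self-contained than the paper's. First, you verify invariance of $\mu$ directly from the fact that $(y,z)=(0,\rho)$ is a fixed point of the $(y,z)$-subsystem for every $x$, rather than invoking the Lyapunov-based existence result of Section 4 as the paper does; this makes the proposition logically independent of that machinery. Second, the paper's rigorous argument stops after concluding $z_t = z_0$ and $x$ stationary OU, leaving implicit the final pathwise step that the rigorous proof needs (it appears only in the preceding heuristic discussion): with $y\equiv 0$, the $y$-equation forces $x_t(\rho - z_0) = 0$ for all $t$, and since the OU path is a.s. not identically $0$ one must have $z_0 = \rho$ a.s. You carry this step out explicitly, which is the right thing to do.
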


Once again, before proceeding to a rigorous proof, we present a
formal argument.  Suppose that in this parameter range there exists
an invariant probability measure $\mu$ of \eqref{eqn:lor63} and impose
the apriori unjustified condition \eqref{eq:fin:mom:formal}.  Let
$(x,y,z)$  be the solution starting for an initial condition
distributed as $\mu$. Observe that
\begin{align}
  \frac{1}{2} \frac{d}{dt}(y^2 + (z - \rho)^2) 
  = x (\rho - z) y - y^2 + xy(z - \rho)
  =  - y^2.
\end{align}
Integrating this expression in time, taking expected values and using
stationarity one finds that $\E_\mu\int_0^t y^2 ds = 0$ so that
$y_t \equiv 0$ for every $t \geq 0$ by path continuity.  Then, as a
consequence of this calculation, we infer that $\frac{d}{dt}z = 0$ so
that $z_t \equiv z_0$ for every $t \geq 0$.  Thus, with stationarity,
the equation for $\tilde{X}_t = (x_t, y_t)$ reduces to

\begin{align}
  d x = - \sigma x dt + \sqrt{2 \gamma_1} dB_1, \quad
  \qquad 
  d y = (z_0 - \rho) x dt,
  \quad
  y_0 = 0.  
\end{align}
The stationarity implies that for every $t, T$
\begin{equation}
(z_0 - \rho)  \E \int_t^T x ds = \E y(T) - \E y(t) = 0 \,.
\end{equation}
Since $x$ is almost surely continuous, either $z_0 = \rho$ or $x =
0$. The latter case leads to a immediate contradiction, whereas the
former one implies that \eqref{eq:stat:sol:degen} is the only
invariant state of (\ref{eqn:lor63}).

\begin{proof}[Proof of Proposition~\ref{lem:mnx}]
  By Theorem~\ref{thm:main} (i), there exists an invariant probability
  measure $\mu$, and let $(x, y, z)$ be a random initial condition
  distributed according to $\mu$.  For each $N \geq 1$, let $F_N$ be
  as in \eqref{fnd}.  Similar to the above, fix an increasing sequence
  $(N_j)_{j = 0}^\infty$ such that $N_{j + 1} \geq N_j$.  Then,
  applying It\^o's formula to $F_N(y^2 + z^2)$ and taking expected
  values gives
\begin{align}
  \E F_N(y^2_t + z^2_t) = \E F_N(y^2_0 + z^2_0) 
  + \E \int_0^t F'_N(y^2 + z^2)(-2y(xz + y) + 2zxy) ds \,.
\end{align}
Since the process is stationary, we have
\begin{equation}
\E F'_N(y^2 + z^2) y^2 = 0 \,.
\end{equation}
As in the proof of  \cref{thm:main} (ii), by using that $(F_{N_j}')$ is an
increasing sequence converging pointwise to 1, the monotone
convergence theorem implies
\begin{equation}
\E y^2 = 0 \,.
\end{equation}
However, if $y = 0$ almost surely, then $z' = 0$, and therefore
$z_t = z_0$, and $x$ is an invariant state of
\begin{equation}
dx = -\sigma x dt + \sqrt{2\gamma_1} dB_1
\end{equation}
as desired. 
\end{proof}

\section{Non-Existence of Stationary States
  in the Presence of a Linear Instability}
\label{sec:non-existence}

In this section, we prove Theorem~\ref{thm:main} part (iii) by
constructing functions $V_i$ satisfying the hypotheses of
Theorem~\ref{thm:criterianon}.  In the expressions that follow, we
assume that all constants depend implicitly on
$\sigma, \beta, \gamma_1, \gamma_2$, and $\gamma_3$.  Any other
dependence will be indicated explicitly.

\subsection{Construction overview}
Before proceeding to the proof, let us overview the construction of
$V_1$ and $V_2$ needed to apply Theorem \ref{thm:criterianon}.  We
remark that the function $V_1$ identifies `bad' initial conditions
from which the dynamics takes too long to return near the origin.
Because $\beta <0$, we note that the $z$ process in
equation~\eqref{eqn:lor63} grows exponentially fast when it is
initially large and when the product $xy$ is not too large.  In fact,
if one considers the test function
\begin{equation}
M(x, y, z) = 2\sigma z - x^2 \,, 
\end{equation}
then we note that 
\begin{align}
\LL M(x,y,z) = 2\sigma( |\beta | z+ x^2) - 2\gamma_1.
\end{align}
Hence, if $x^2$ is dominated by $z$, then the system~\eqref{eqn:lor63}
grows exponentially fast on average.  However, we have to be careful
because the noise can drive the dynamics out of the region
$\{x^2 < |\beta|z\}$.  To see that such scenario does not occur with
high enough probability, we have to modify $M$ and choose appropriate
$V_2$.

Let us first discuss possible candidates for $V_2$.  It is easy to
check that $\mathcal{L} H$ is neither bounded from above nor from
below, and therefore it is not a suitable choice for $V_2$. However,
we will see that $\mathcal{L} (\ln H)$ is bounded outside of a compact
set, and as such we use an appropriate multiple of $\ln H$ for the
function $V_2$.  To satisfy the assumption (p3) in Theorem
\ref{thm:criterianon}, it is necessary that $V_1$ has smaller than
logarithmic increase at infinity. Given the analysis above, a natural
choice would be $F\circ M$, with slowly growing $F$.  However, unlike
$H$, $M$ does not have a definite sign, and therefore to define
$V_1 = F \circ M$ one has to define $F$ on the whole real line. We
will verify below that $F(\zeta) = \ln \ln \zeta$ indeed produces
$\mathcal{L}(F\circ M(x, y, z)) \geq 0$ for large $M(x, y, z)$, but
$F$ is not even defined for $M(x, y, z) \leq 0$. In addition, the
function $\zeta \mapsto F(|\zeta + C|)$ still does not satisfy the
desired inequality.  Therefore, we define $F$ to be the double
logarithm for large positive values of $\zeta$ and $F \equiv 0$ on
$(-\infty, 0)$. The final challenge is to connect these two regions as
a smooth function that satisfies $\mathcal{L}(F\circ M) \geq 0$.

\subsection{The construction}

Based on the heuristics for the construction of $V_1$ and $V_2$, we
now provide a rigorous proof.

\begin{proof}[Proof of Theorem~\ref{thm:main} (iii)]
  We define $V_1, V_2 : \RR^3 \to \RR$ satisfying the
  hypotheses of Theorem~\ref{thm:criterianon}

\vspace{0.1in}

\textbf{Step 1.}  Fix $R > 1$ such that $H(x, y, z) > 1$ for any
$|(x, y, z)| > R$.  Let $W_2 \in C^2(\RR^3)$ satisfy
\begin{align}
W_2(x, y, z) =
\ln H(x, y, z)  \qquad \text{ for } \qquad |(x,y,z)| >R.  
\end{align}
Then, $W_2 > 0$ outside of a compact set.  Moreover, standard calculations yield
\begin{align}
  \mathcal{L} W_2(x, y, z) 
  =& \frac{2|\beta| z^2 -2 y^2 -2\sigma x^2 -2|\beta| (\sigma+\rho)z
     + 2(\gamma_1 + \gamma_2 +\gamma_3)}{H(x, y, z)}\\
   &- \frac{4(x^2 \gamma_1 + y^2 \gamma_2
     + (z - (\rho + \sigma))^2\gamma_3)}{H^2(x, y, z)} \,.
\end{align}
Consequently, there exists a constant $K>0$ such that 
\begin{equation}\label{eff}
  \mathcal{L} W_2(x, y, z) \leq K  \,\,\,
  \text{ for all } \,\,\, (x,y,z) \in \RR^3. 
\end{equation}
We thus define $V_2=W_2/K$.

\vspace{0.1in}

\textbf{Step 2.}
Define constants
\begin{align}
A = \frac{2\gamma_1 + 2}{|\beta|}, \qquad 
m = \max \{2\gamma_1, 2\sigma^2  \gamma_3\}  \,, 
\end{align}
and let 
\begin{align}
 f(\zeta)  := (1 - \cos \zeta)^2 \,.
\end{align}
One can check that $f(0) = f'(0) = f''(0) = 0$ and $f$ is (strictly)
increasing on $(0, \pi)$, convex on $(0, \frac{2}{3}\pi)$ and concave
on $( \frac{2}{3}\pi, \pi)$. In particular
$f'(\frac{2}{3}\pi) > 0 = f''(\frac{2}{3}\pi)$. By continuity, fix
$B > \frac{2}{3}\pi$ close to $\frac{2}{3}\pi$ such that
$f' \geq -m f''$ on $(\frac{2}{3}\pi, B)$.

Next, for constants $c_0, c_1, c_2$ to be determined in a moment, define 
\begin{align}
\Psi(\zeta) = 
	\begin{cases}
		0 & \zeta  < 0 \,, \\
		(1 - \cos \zeta)^2 = f(\zeta) & \zeta \in [0, B] \,, \\
		c_0 \ln \ln (\zeta + c_1) + c_2 & \zeta > B \,.  
	\end{cases}
\end{align}
We now claim that $c_0, c_1, c_2$ can be chosen such that $\Psi$ is
$C^2$ function.  Because $\Psi$ is $C^2$ function at $0$, we have left
to show that we can find $c_0, c_1, c_2$ such that
\begin{align}
  c_0 \ln \ln (B + c_1) + c_2 &=  f(B) > 0, \\
 \frac{c_0}{(B + c_1) \ln (B + c_1) } &= f'(B) > 0, \\
 -\frac{c_0(1 + \ln(B + c_1))}{[(B + c_1) \ln (B + c_1)]^2 } &= f''(B) < 0 \,.
\end{align}
Substituting the second equation into the third one, we obtain
\begin{equation}\label{eqt}
\frac{1 + \ln(B + c_1)}{(B + c_1) \ln (B + c_1)} = - \frac{f''(B)}{f'(B)} > 0 \,.
\end{equation}
However, the function
\begin{align}
z \mapsto \frac{1 + \ln z}{z \ln z}
\end{align}
is positive and decreasing on $(1, \infty)$ with a vertical asymptote
at $z = 1$ and decaying at infinity. Thus, there exists (unique) $c_1$
such that $B + c_1 > 1$ and \eqref{eqt} holds true. Then, for already
fixed $c_1$ we set
\begin{align}
c_0 = f'(B) (B + c_1) \ln (B + c_1)  > 0
\end{align}
and 
\begin{align}
c_2 = f(B) - c_0 \ln \ln (B + c_1) \,.
\end{align}
It now follows that $\Psi$ is $C^2$ with this choice of constants
$c_0, c_1, c_2$.

Finally fix $\lambda \in (0, 1)$ such that 
\begin{align}
 1 \geq \lambda m \frac{(1 + \ln(B + c_1))}{(B + c_1)\ln (B + c_1)} \,
\end{align}
and define $V_1$ by 
\begin{align}
V_1(x, y, z) = \Psi(\lambda(2\sigma z - x^2 - A)) \
\end{align}
and note that $V_1$ is  $C^2$ function 
and
\begin{equation}\label{ica}
  \mathcal{L} V_1
  = (2\sigma|\beta| z + 2x^2 - 2\gamma_1) \lambda  \Psi'
  + (4x^2 \gamma_1 + 4\sigma^2  \gamma_3) \lambda^2 \Psi'' 
\end{equation}
where, for clarity of presentation, we omitted the argument
$(x, y, z)$ of $V_1$, and $\zeta := \lambda(2\sigma z - x^2 - A)$ of
$\Psi$.

\vspace{0.1in}

\textbf{Step 3.}
We claim that 
\begin{equation}\label{fcl}
\mathcal{L} V_1 \geq 0 \,.
\end{equation}
First, if $\zeta \leq 0$, then $\Psi' = \Psi'' = 0$ and \eqref{fcl}
follows.  For the case when $\zeta \geq 0$, note that since
$A = \frac{2\gamma_1 + 2}{|\beta|}$,
$\zeta = \lambda(2\sigma z - x^2 - A) \geq 0$ implies
\begin{align}
2\sigma z  \geq 2\sigma z - x^2 \geq A = \frac{2\gamma_1 + 2}{|\beta|} \,,
\end{align}
and consequently $2\sigma|\beta| z - 2\gamma_1 \geq 2$. Hence, 
\begin{align} \label{eoc}
2\sigma|\beta| z + 2x^2 - 2\gamma_1 \geq 2(x^2 + 1), \qquad 
0 \leq (4x^2 \gamma_1 + 4\sigma^2 \gamma_3) \leq 2m (x^2 + 1) \,.
\end{align}
Hence, if $\zeta \geq 0$, the coefficients of $\Psi', \Psi''$ in
\eqref{ica} are non-negative.  We split the domain $\zeta \geq 0$ into
three pieces and then finally conclude \eqref{fcl}.

If $\zeta \in [0, \frac{2}{3}\pi]$, then
$\Psi'(\zeta), \Psi''(\zeta) \geq 0$, and the non-negativity of
coefficients of $\Psi', \Psi''$ in \eqref{ica} implies \eqref{fcl}.

If $\zeta \in (\frac{2}{3}\pi, B)$, then $\Psi' (\zeta)> 0$ and
$\Psi''(\zeta) < 0$. Thus, from \eqref{fcl} and \eqref{eoc} follows
\begin{align}
  \frac{1}{\lambda}\mathcal{L} V_1 
     &\geq (2\sigma|\beta| z + 2x^2 - 2\gamma_1)  \Psi' 
       + \lambda(4x^2 \gamma_1 + 4\sigma^2 \gamma_3)  \Psi'' 
       \notag\\
     &\geq 2(x^2 + 1) \Psi' + 2\lambda m (x^2 + 1) \Psi'' \geq 0\,,
       \label{scl}
\end{align}
where in the last inequality we used the definition of $B$ and
$\lambda \in (0, 1]$.

Finally, if $\zeta \in [B, \infty)$, then
$\Psi(\zeta) = c_0 \ln\ln(\zeta + c_1) + c_2$.  Since $c_0 > 0$, one
has $\Psi'(\zeta) > 0$, $\Psi''(\zeta) < 0$.  Using \eqref{scl} and
the fact that the function $z \mapsto \frac{1 + z}{z \ln z}$
decreases, we obtain for any $\zeta > B$
\begin{align}
\frac{1}{\lambda}\mathcal{L} V_1 
&\geq 2(x^2 + 1) \Psi' + 2\lambda m (x^2 + 1) \Psi'' \\
&\geq \frac{2c_0(x^2 + 1)}{(2\zeta + c_1)\ln (\zeta + c_1)}
\left( 1 - \lambda m \frac{(1 + \ln(\zeta + c_1))}{(\zeta + c_1)\ln (\zeta + c_1)}
\right)  \\
&\geq 
\frac{2c_0(x^2 + 1)}{(2\zeta + c_1)\ln (\zeta + c_1)}
\left( 1 - \lambda m \frac{(1 + \ln(B + c_1))}{(B + c_1)\ln (B + c_1)}
\right) \geq 0
\,,
\end{align}
where in the last estimate we used the definition of $\lambda$.
Thus, $\mathcal{L} W_1  \geq 0$ as desired. 
 
\vspace{0.1in}

\textbf{Step 4.}  Let us verify that the assumptions of 
\cref{thm:criterianon} are satisfied with $V_1$ and $V_2 $ .  First
(p4) follows from the construction.  To verify (p1), observe that
\begin{align}
  \limsup_{|(x, y, z)| \to \infty} V_1(x, y, z) 
  &\geq \lim_{z \to \infty} V_1(0, 0, z) 
    =  \lim_{z \to \infty} \Psi (\lambda(2\sigma z - A))  
    \\
   &= \lim_{z \to \infty}  c_0\ln \ln (\lambda(2\sigma z - A) + c_1) + c_2
   =  \infty \,.
\end{align}
Also, $\lim_{|(x, y, z)| \to \infty} H(x, y, z) = \infty$ and (p2) is
satisfied.  Finally, (p3) follows from
\begin{align}
  \limsup_{R \to \infty} 
      \frac{\sup_{|x, y, z| = R} 
       V_1(x, y, z)}{\inf_{|x, y, z| = R} V_2(x, y, z)} 
    &\leq
      \limsup_{R \to \infty} 
         \frac{ V_1(0, 0, R)}{\ln (R^2 - 2(\sigma + \rho)R)} \\
    &\leq \lim_{R \to \infty} 
      \frac{c_0 \ln\ln (\lambda(2\sigma R - A) + c_1) + c_2}
           {\ln (R^2 - 2(\sigma + \rho)R)} 
      = 0 \,,
\end{align}
where we used that $z \mapsto V_1(x, y, z)$ is increasing for large
$z$ and $(x, y) \mapsto V_1(x, y, z)$ is non-increasing.  This
finishes the proof.

\end{proof}

\section*{Acknowledgements}

We would like to thank Centra International de Rencontres
Mathématiques (CIRM), in Marseille, France and the Centro
Internazionale per la Ricerca Matematica (CIRM) in Trento, Itally for
supporting the first two authors with two short visiting research
fellowships where much of this work was conceived.  We would also like
to acknowledge our colleagues Jonathan Mattingly and Jared Whitehead
for helpful feedback on this work.

Our efforts were partially supported under grants NSF-DMS-1816408
(JF), DMS-1313272 (NEGH), DMS-1816551 (NEGH), DMS-1612898 (DPH),
DMS-1855504 (DPH), from the National Science Foundation as well as the
Simons Foundation travel grant 515990 (NEGH).

\begin{footnotesize}
\newcommand{\etalchar}[1]{$^{#1}$}


\begin{thebibliography}{CEHRB18}

\bibitem[AKM12]{AKM_12}
Avanti Athreya, Tiffany Kolba, and Jonathan Mattingly.
\newblock {Propagating {L}yapunov functions to prove noise-induced
  stabilization}.
\newblock {\em Electronic Journal of Probability}, 17, 2012.

\bibitem[BD12]{BD_12}
Katar{\'i}na Bodov{\'a} and Charles Doering.
\newblock {Noise-induced statistically stable oscillations in a
  deterministically divergent nonlinear dynamical system}.
\newblock {\em Communications in Mathematical Sciences}, 10(1):137--157, 2012.

\bibitem[Bel06]{RB_06}
Luc~Rey Bellet.
\newblock {Ergodic properties of {M}arkov processes}.
\newblock In {\em {Open quantum systems II}}, pages 1--39. Springer, 2006.

\bibitem[BHW12]{BHW_12}
Jeremiah Birrell, David~P Herzog, and Jan Wehr.
\newblock {The transition from ergodic to explosive behavior in a family of
  stochastic differential equations}.
\newblock {\em Stochastic Processes and their Applications}, 122(4):1519--1539,
  2012.

\bibitem[CDG{\etalchar{+}}06]{CalzavariniEtAl2006}
E~Calzavarini, CR~Doering, JD~Gibbon, D~Lohse, A~Tanabe, and F~Toschi.
\newblock {Exponentially growing solutions in homogeneous Rayleigh-B{\'e}nard
  convection}.
\newblock {\em Physical Review E}, 73(3):035301, 2006.

\bibitem[CEHRB18]{CEHB_18}
No{\'e} Cuneo, Jean-Pierre Eckmann, Martin Hairer, and Luc Rey-Bellet.
\newblock {Non-equilibrium steady states for networks of oscillators}.
\newblock {\em Electronic Journal of Probability}, 23, 2018.

\bibitem[CFK{\etalchar{+}}17]{CFK_17}
Joe~P Chen, Lance Ford, Derek Kielty, Rajeshwari Majumdar, Heather McCain,
  Dylan O?Connell, and Fan~Ny Shum.
\newblock {Stabilization by noise of a $\mathbb{C}^2$-valued coupled system}.
\newblock {\em Stochastics and Dynamics}, 17(06):1750046, 2017.

\bibitem[CZH20]{CH_20}
Michele Coti~Zelati and Martin Hairer.
\newblock A noise-induced transition in the lorenz system.
\newblock {\em arXiv preprint:2004.12815}, 2020.

\bibitem[EH00]{EH_00}
J-P Eckmann and Martin Hairer.
\newblock {Non-Equilibrium Statistical Mechanics of Strongly Anharmonic Chains
  of Oscillators}.
\newblock {\em Communications in Mathematical Physics}, 212(1):105--164, 2000.

\bibitem[EHS17]{Elgindi2017}
{Elgindi Tarek}, {Hu Wenqing}, and {Sver{\'a}k Vladim{\'i}r}.
\newblock {On 2d Incompressible Euler Equations with Partial Damping}.
\newblock {\em Communications in Mathematical Physics}, 355(1):145--159, 2017.

\bibitem[GHW11]{GHW_11}
Krzysztof Gawedzki, David~P Herzog, and Jan Wehr.
\newblock {Ergodic properties of a model for turbulent dispersion of inertial
  particles}.
\newblock {\em Communications in mathematical physics}, 308(1):49, 2011.

\bibitem[Hai09]{Hairer_09}
Martin Hairer.
\newblock {How hot can a heat bath get?}
\newblock {\em Communications in Mathematical Physics}, 292(1):131--177, 2009.

\bibitem[HM09]{HaiM_09}
Martin Hairer and Jonathan~C Mattingly.
\newblock {Slow energy dissipation in anharmonic oscillator chains}.
\newblock {\em Communications on Pure and Applied Mathematics: A Journal Issued
  by the Courant Institute of Mathematical Sciences}, 62(8):999--1032, 2009.

\bibitem[HM15a]{HerzogMattingly2015}
D.~Herzog and J.~Mattingly.
\newblock {Noise-induced stabilization of planar flows I}.
\newblock {\em Electronic Journal of Probability}, 20, 2015.

\bibitem[HM15b]{HM_15i}
David~P Herzog and Jonathan~C Mattingly.
\newblock {A practical criterion for positivity of transition densities}.
\newblock {\em Nonlinearity}, 28(8):2823, 2015.

\bibitem[H{\"o}r67a]{Hor_67}
Lars H{\"o}rmander.
\newblock {Hypoelliptic second order differential equations}.
\newblock {\em Acta Mathematica}, 119:147--171, 1967.

\bibitem[H{\"o}r67b]{hormander1967hypoelliptic}
Lars H{\"o}rmander.
\newblock {Hypoelliptic second order differential equations}.
\newblock {\em Acta Mathematica}, 119(1):147--171, 1967.

\bibitem[HP90a]{hughes1990chaos}
David~W Hughes and Michael~RE Proctor.
\newblock {Chaos and the effect of noise in a model of three-wave mode
  coupling}.
\newblock {\em Physica D: Nonlinear Phenomena}, 46(2):163--176, 1990.

\bibitem[HP90b]{hughes1990low}
DW~Hughes and MRE Proctor.
\newblock {A low-order model of the shear instability of convection: chaos and
  the effect of noise}.
\newblock {\em Nonlinearity}, 3(1):127, 1990.

\bibitem[K{\etalchar{+}}87]{Kliem_87}
Wolfgang Kliemann et~al.
\newblock {Recurrence and invariant measures for degenerate diffusions}.
\newblock {\em The annals of probability}, 15(2):690--707, 1987.

\bibitem[KCSW19]{KCSW_19}
Tiffany Kolba, Anthony Coniglio, Sarah Sparks, and Daniel Weithers.
\newblock {Noise-Induced Stabilization of Perturbed Hamiltonian Systems}.
\newblock {\em The American Mathematical Monthly}, 126(6):505--518, 2019.

\bibitem[{Kha}60]{Khas_60}
Rafail~Z {Khasminskii}.
\newblock {Ergodic properties of recurrent diffusion processes and
  stabilization of the solution to the Cauchy problem for parabolic equations}.
\newblock {\em Theory of Probability \& Its Applications}, 5(2):179--196, 1960.

\bibitem[Kha11]{Khas_11}
Rafail Khasminskii.
\newblock {\em {Stochastic stability of differential equations}}, volume~66.
\newblock Springer Science \& Business Media, 2011.

\bibitem[Lor63]{lorenz1963deterministic}
Edward~N Lorenz.
\newblock Deterministic nonperiodic flow.
\newblock {\em Journal of the atmospheric sciences}, 20(2):130--141, 1963.

\bibitem[LS17]{LS_17}
Matti Leimbach and Michael Scheutzow.
\newblock {Blow-up of a stable stochastic differential equation}.
\newblock {\em Journal of Dynamics and Differential Equations}, 29(2):345--353,
  2017.

\bibitem[MSH02]{MSH_02}
Jonathan~C Mattingly, Andrew~M Stuart, and Desmond~J Higham.
\newblock {Ergodicity for SDEs and approximations: locally Lipschitz vector
  fields and degenerate noise}.
\newblock {\em Stochastic processes and their applications}, 101(2):185--232,
  2002.

\bibitem[MT57]{MT_57}
Gisir{\^o} Maruyama and Hiroshi Tanaka.
\newblock {Some properties of one-dimensional diffusion processes}.
\newblock {\em Memoirs of the Faculty of Science, Kyushu University. Series A,
  Mathematics}, 11(2):117--141, 1957.

\bibitem[MT93]{MT_93}
Sean~P Meyn and Richard~L Tweedie.
\newblock {Stability of {M}arkovian processes III: Foster--{L}yapunov criteria
  for continuous-time processes}.
\newblock {\em Advances in Applied Probability}, 25(3):518--548, 1993.

\bibitem[MT12]{MT_12}
Sean~P Meyn and Richard~L Tweedie.
\newblock {\em {Markov chains and stochastic stability}}.
\newblock Springer Science \& Business Media, 2012.

\bibitem[RBT02]{BT_02}
Luc Rey-Bellet and Lawrence~E Thomas.
\newblock {Exponential convergence to non-equilibrium stationary states in
  classical statistical mechanics}.
\newblock {\em Communications in mathematical physics}, 225(2):305--329, 2002.

\bibitem[Sch93]{Sch_93}
Michael Scheutzow.
\newblock {Stabilization and destabilization by noise in the plane}.
\newblock {\em Stochastic Analysis and Applications}, 11(1):97--113, 1993.

\bibitem[SS17]{scheel2017predicting}
Janet~D Scheel and J{\"o}rg Schumacher.
\newblock {Predicting transition ranges to fully turbulent viscous boundary
  layers in low Prandtl number convection flows}.
\newblock {\em Physical Review Fluids}, 2(12):123501, 2017.

\bibitem[SV72a]{SV_72ii}
D~Stroock and SRS Varadhan.
\newblock {On degenerate elliptic-parabolic operators of second order and their
  associated diffusions}.
\newblock {\em Communications on Pure and Applied Mathematics}, 25(6):651--713,
  1972.

\bibitem[SV72b]{SV_72i}
Daniel~W Stroock and Srinivasa~RS Varadhan.
\newblock {On the support of diffusion processes with applications to the
  strong maximum principle}.
\newblock In {\em {Proceedings of the Sixth Berkeley Symposium on Mathematical
  Statistics and Probability (Univ. California, Berkeley, Calif., 1970/1971)}},
  volume~3, pages 333--359, 1972.

\bibitem[Thu92]{thual1992zero}
Olivier Thual.
\newblock {Zero-Prandtl-number convection}.
\newblock {\em Journal of Fluid Mechanics}, 240:229--258, 1992.

\bibitem[Won66]{Won_66}
WM~Wonham.
\newblock {Liapunov criteria for weak stochastic stability}.
\newblock {\em Journal of Differential Equations}, 2(2):195--207, 1966.

\end{thebibliography}
\end{footnotesize}

\vspace{.3in}
\begin{multicols}{2}

\noindent
Juraj F\"oldes\\ {\footnotesize
Department of Mathematics \\ University of Virginia\\ Web:
\url{http://www.people.virginia.edu/~jf8dc/}\\ Email:
\url{foldes@virginia.edu}} \\[.35cm]

\noindent 
Nathan E. Glatt-Holtz\\ {\footnotesize Department of
Mathematics\\ Tulane University\\ Web:
\url{http://www.math.tulane.edu/~negh/}\\ Email:
\url{negh@tulane.edu}} \\[.35cm] 
 \columnbreak

 \noindent 
David P. Herzog\\ {\footnotesize
Department of Mathematics \\ Iowa State University\\ Web:
\url{http://orion.math.iastate.edu/dherzog/}\\ Email:
\url{dherzog@iastate.edu}} \\[.35cm]
 \columnbreak

\end{multicols}

\end{document}